\headsep \linespread{1.0}
\newtheorem{thm}{Theorem}[section]
\newtheorem{lem}[thm]{Lemma}
\theoremstyle{definition}
\newtheorem{rem}{Remark}[section]
\numberwithin{equation}{section}
\DeclareMathSymbol{\C}{\mathalpha}{AMSb}{"43}
\DeclareMathSymbol{\R}{\mathalpha}{AMSb}{"52}
\newcommand{\ds}{\displaystyle}
\newcommand{\al}{\alpha}
\newcommand{\be}{\beta}
\newcommand{\de}{\delta}
\newcommand{\Om}{\Omega}
\newcommand{\si}{\sigma}
\newcommand{\ve}{\varepsilon}
\newcommand{\oo}{\infty}
\newcommand{\pa}{\partial}
\newcommand{\di}{\nabla\cdot}
\newcommand{\cu}{\nabla\times}
\newcommand{\bs}{\boldsymbol}
\newcommand{\non}{\nonumber}
\newcommand{\dsf}{\displaystyle \frac}
\newcommand{\ii}{\displaystyle\int_0^T}
\newcommand{\II}{\displaystyle\int_{\Omega}}
\newcommand{\AAA}{\boldsymbol a}
\newcommand{\BB}{\boldsymbol b}
\newcommand{\CC}{\boldsymbol c}
\newcommand{\nn}{\mathbf n}
\newcommand{\uu}{\boldsymbol u}
\newcommand{\vvv}{\boldsymbol v}
\newcommand{\xx}{\boldsymbol x}
\newcommand{\VN}{{\boldsymbol V}_{\boldsymbol N}}
\renewcommand{\blue}{\color{black}}
\def \er {{\bs e}_r}
\def \et {{\bs e}_\theta}
\def \ep {{\bs e}_\varphi}
\def \spht {{\bs T}_l^m}
\def \sphv {{\bs V}_l^m}
\def \sphw {{\bs W}_l^m}
\def \bbt {\bar{\beta}}
\title{An efficient numerical scheme  for a 3D spherical dynamo equation
% \thanks
% {This work was supported by a grant from National Natural Science Foundation of China
% (Grant No. ) and Hubei Key Laboratory of Mathematical Sciences.}
}
\author[TC]{Ting Cheng\fnref{CT}}
\address[TC]{School of Mathematics and Statistics $\&$ Hubei Key Laboratory of Mathematical Sciences, Central China Normal University, Wuhan, 430079, P.R.CHINA}
{\fntext[CT]{Supported by NSF of China DOS 11871240  and DOS 11771170. }}
\author[LM]{Lina Ma\fnref{ML}}
\address[LM]{Department of Mathematics, Trinity College, Hartford, CT 06106, USA}
 \author[JS]{Jie Shen\fnref{SJ}}
 \address[JS]{Department of Mathematics, Purdue University, West Lafayette, IN 47907, USA. }
\begin{document}
\begin{abstract}
We develop an efficient numerical scheme for the 3D mean-field spherical dynamo equation. The scheme is based on a semi-implicit discretization in time and a spectral method in space based on the divergence-free spherical harmonic functions. A special  semi-implicit approach is proposed such that at each time step one only needs to solve a linear system  with constant coefficients. Then, using expansion in divergence-free spherical harmonic functions in the transverse directions allows us to reduce the linear system at each time step to a sequence of one-dimensional equations in the radial direction, which can then be efficiently solved by using a  spectral-element method. We show that the solution of   fully discretized scheme remains bounded independent of the number of unknowns, and present 
numerical results to validate our scheme.

Keywords and phrases: spherical dynamo model,
vector spherical harmonics, spectral method, stability, convergence.

AMS subject classifications: 65M12, 65M70, 41A30, 86-08
\end{abstract}
\maketitle

\section{Introduction}

It is well known that many astrophysical bodies have intrinsic
magnetic fields. For examples,   Earth possesses a magnetic
field that has been known for many centuries; sunspots is   the
best-known manifestation of the solar magnetic activity cycle. But only in the last few decades scientists began to try to
understand more about the origin of these magnetic fields. It is widely
accepted that the magnetic activities of many planets and stars
represent the magnetohydrodynamic dynamo processes taking place in
their deep interiors. For the physical background of the dynamo
model, we refer to R. Hollerbach \cite{hollerbach1996theory} or
Chris A. Jones \cite{MR2768025} and the references therein.

There are numerous simplified mathematical models and numerical
simulations in the literature (see, e.g. Bullard et al. \cite{bullard1954homogeneous}, R. Hollerbach\cite{hollerbach2000spectral}, R. A. Bayliss, et al. \cite{MR2354034},
C. Guervilly, and P. Cardin,
\cite{MR2774729}, Chris A. Jones \cite{MR2768025}, W. Kuang and J. Bloxham
\cite{kuang1999numerical}, David
Moss \cite{MR2215943}, K. Zhang and F. Buss \cite{zhang1989convection} Paul H. Roberts, et al., \cite{MR2774728} and the
references therein).  There are also a few  studies with numerical analysis on some numerical methods for these
models, e.g.,{\blue
\cite{Chan06}, \cite{MR2537984}, \cite{zhang2003three}, \cite{chan2001nonlinear} and \cite{MR2602741}.} In
\cite{Chan06}, Chan, Zhang and Zou studied the
mathematical theory and its numerical approximation based on a
finite element method, while Mohammad M. Rahman and David R. Fearn
\cite{MR2602741} developed  a spectral approximation of some nonlinear mean-field dynamo
equations with different geometries and {\blue toroidal and poloidal decomposition}. 

There are two main difficulties in dealing with dynamo models: (i) it consists of  three-dimensional vector equations in spherical shells; and (ii) the magnetic field is implicitly divergence-free. Using a finite-element method to deal with the above issues may be complicated and costly. We  consider  in this paper the  model used in
\cite{Chan06} and propose an efficient numerical scheme based on a semi-implicit discretization in time and a spectral method in space based on the divergence-free spherical harmonic functions. we first discretize the model in time using a semi-implicit approach such that at each time step one only needs to solve a linear system  with piecewise constant coefficients. Then, we discretize  this linear system by using a spectral discretization consisting of divergence-free spherical harmonic functions in the transverse directions and a spectral-element method in the radial direction. This way, the linear system can be reduced to a sequence of one-dimensional equations in the radial direction for the coefficients of the expansion in divergence-free spherical harmonic functions so that it can be efficiently and accurately solved by using a 
spectral-element method.

The remainder of this paper is organized as follows. In  section 2, we describe the model that we consider,  list some of its mathematical properties, and some  useful mathematical tools that will be used later.
In Section 3, a fully
discrete spectral method for approximating the continuous
problem is proposed. The stability and the convergence analysis of
our numerical solutions are carried out in section 4. Section 5 contains implementation details, and a numerical experiment is shown in section 6 that demonstrates
the efficiency of our numerical scheme.

\section{Preliminaries}
\subsection{The Model}

We consider the
 following nonlinear spherical mean-field
dynamo system:
\begin{equation}
  \label{1}
 \left\{
  \begin{array}{lll}
  & \BB_t +\cu(\beta(\xx)\cu\BB)
  =R_\al \cu(\frac{f(x,t)}{1+\si|\BB|^2}\BB)+R_m \cu(\uu\times \BB)
     & \mbox{in} \ \ \Om \times (0,T),\\
     & \nabla\times\BB\times \nn=0
     & \mbox{on} \ \
      {\pa \Omega \times (0,T)},\\
    &\BB(\xx,0)=\BB^0(\xx)
     & \mbox{in} \ \ \Om.
  \end{array}
 \right.
 %\hspace{3.5cm}
\end{equation}
\begin{figure}[htp]
%\setlength{\unitlength}{.5cm}
%\begin{picture}(10, 14)
%\put(14,7){\circle{4}}
%\put(14,7){\circle{7}}
%\put(14,7){\circle{11}}
%\put(13.7,7){$\Omega_1$}
%\put(16.3,7){$\Omega_2$}
%\put(18,7){$\Omega_3$}
%\put(15.5,5){$\Gamma_1$}
%\put(16.5,4){$\Gamma_2$}
%%\put(13,0){$\rm {Figure} \ 1$}
%\end{picture}
\centering
\includegraphics[scale=.4]{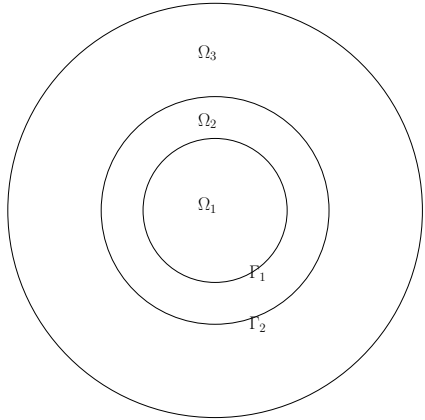}
\caption{Domain $\Omega$}\label{fig0}
\end{figure}

The unknown is the magnetic field $\BB$. $\Omega$ is the
physical domain of interest, which consists of three non-overlapping
zones $\Om_k\, (k=1,2,3)$ in spherical geometry (see Figure \ref{fig0}), where  $\Omega_1$ is the core, $\Omega_2$ is the convection zone and $\Omega_3$ is the outer photosphere.  $\nn$ denotes the unit outer normal vector to
the boundary of $\Om$.
The physical meanings
of the variables in \eqref{1} are as follows: $\uu=\uu(\xx,t)$
represents the fluid velocity field, which is given here, and
$f(\xx,t)$ is also a known function. Both $\uu$ and $f$ vanish on $\bar\Om_1$ and $\bar\Om_3$.
The non-dimensional parameters
$R_\al, \ R_m$ are Rayleigh numbers, $\si$ is a constant, $\beta(\xx)$ is
the magnetic diffusivity satisfying $\be_1\le \beta(\xx)\le \be_2$. The diffusivity is considered as constant in the convection zone. 
 At the two interfaces  $\Gamma_1$ and
$\Gamma_2$,  we impose  the physical jump conditions
\begin{equation}\label{1.1}
 [\be(\xx)\cu\BB\times \nn ]=0, \ [\BB ]=0
     \ \ \ \ \mbox{on} \ (\Gamma_1 \cup \Gamma_2) \times (0,T),
\end{equation}
where
 $[\AAA]$ denotes the jumps
of $\AAA$ across the interfaces and $\nn$ is the outward normal.

\vspace{0.5cm}

\begin{rem} Taking the divergence of the first equation in \eqref{1}, we find $\text{div} {\BB}_t=0$. Hence, if we impose the condition $\di \BB^0=0$, we have
  $\di \BB=0 \ \mbox{in} \ \ \Om \times (0,T)$.
\end{rem}

We now describe some notations, and recall some basic mathematical
properties for \eqref{1}.
We  denote by $H^m(\Om)\, (m \in
\mathbb{R})$ the usual Sobolev space, and  denote
$H^m(\Om)^3$ by $\boldsymbol{H}^m(\Om)$. As usual, $(\cdot, \cdot)$
denotes the scalar product in $\boldsymbol{L}^2(\Om)$ or
$L^2(\Om)$. For real $s \geq 0$, $\|\cdot\|_{s}$ denotes the norm
of $\boldsymbol{H}^s(\Om)$ (or the $H^s(\Om)$ for scalar
functions), in particular, we denote $\|\cdot\|_{0} \triangleq
\|\cdot\|.$
We define
$$
 {\bs V}=\{\CC \in \bs{L}^2(\Om);
   \mathbf{curl} \ \CC \in \bs{L}^2(\Om)\},$$
and for all $ \ \CC \in {\bs V},$ we set
$$\|\CC\|_{\bs V}^2=\|\CC\|^2+\|\cu \CC\|^2.$$

We   consider the following weak formulation for \eqref{1}:

Find $\BB(t) \in {\bs V}$ such that $\BB(0)=\BB_0$ and for almost all $t \in (0,T),$
\begin{eqnarray}\label{2}
 &&(\BB'(t),\AAA)+(\be \cu\BB(t),\cu\AAA)\non\\
    & = & R_\al \left(\frac{f(t)}{1+\si|\BB|^2}\BB(t), \cu \AAA\right)
     + R_m (\uu(t) \times \BB(t), \cu \AAA),
     \hspace{.8cm} \forall \ \AAA \in {\bs V}.
\end{eqnarray}

By using a standard argument (cf. M. Sermange and R. Temam
\cite{MR716200}), one can easily derive the following result:

\begin{thm} \label{lem1.1}
There exists a unique solution $\BB$ to the dynamo system \eqref{2} such that 
$$\BB \in L^\oo(0,T; \bs V) \cap H^1(0,T;\bs{L}^2(\Om))$$
provided that
$\BB_0 \in {\bs V}, \ f \in H^1(0,T;L^\oo(\Om)),
\ \uu \in H^1(0,T;\bs{L}^\oo(\Om))$. More precisely, there exists a constant $C>0$ such that
\begin{eqnarray} \label{5}
 &&\|\BB\|^2_{L^\oo(0,T;\bs V)}
    + \|\BB\|^2_{H^1(0,T;\bs{L}^2(\Om))}\non\\
 &\leq& C \left( \|\cu \BB^0\|^2 +\|\BB^0\|^2 \right)
         \max\limits_{0 \le t \le T}\left(\|f(t)\|_{L^\oo(\Om)}^2 + \|\uu(t)\|_{\bs L^\oo(\Om)}^2\right)\non\\
 && \cdot \exp \left(C \ii {\|f(t)\|_{L^\oo(\Om)}^2
     +\|f'(t)\|_{L^\oo(\Om)}^2+\|\uu(t)\|_{\bs L^\oo(\Om)}^2+\|\uu'(t)\|_{\bs L^\oo(\Om)}^2}\right).
\end{eqnarray}
\end{thm}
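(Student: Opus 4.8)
The plan is to follow the classical Galerkin strategy used for evolution equations of this type (as in Sermange–Temam for the MHD system), combining an energy estimate with a Gronwall argument. First I would set up a Galerkin approximation: pick a basis of $\bs V$ (for instance, built from the divergence-free vector spherical harmonics introduced later in the paper), let $\BB_N$ be the solution of the finite-dimensional ODE system obtained by testing \eqref{2} against basis elements, and note that this ODE system has a local solution by Cauchy–Lipschitz since the nonlinearity $\frac{f}{1+\si|\BB|^2}\BB$ is smooth in $\BB$. The goal is then to derive a priori bounds on $\BB_N$ that are uniform in $N$, which simultaneously shows the local solution is global and gives the compactness needed to pass to the limit.

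The heart of the matter is the energy estimate, so next I would take $\AAA = \BB(t)$ in \eqref{2} to get
\begin{equation*}
\frac12 \frac{d}{dt}\|\BB\|^2 + \|\sqrt{\be}\,\cu\BB\|^2
 = R_\al\left(\frac{f}{1+\si|\BB|^2}\BB,\cu\BB\right) + R_m(\uu\times\BB,\cu\BB).
\end{equation*}
Using $\be(\xx)\ge\be_1>0$ on the left and estimating the right-hand side via Cauchy–Schwarz together with the pointwise bounds $\left|\frac{f}{1+\si|\BB|^2}\BB\right|\le |f|$ (since $\frac{|\BB|}{1+\si|\BB|^2}\le C$) and $|\uu\times\BB|\le|\uu|\,|\BB|$, one controls everything by $\ve\|\cu\BB\|^2$ plus $C_\ve(\|f\|_{L^\oo}^2 + \|\uu\|_{L^\oo}^2\|\BB\|^2)$. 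Absorbing $\ve\|\cu\BB\|^2$ into the diffusion term and applying Gronwall's inequality yields the bound on $\|\BB\|_{L^\oo(0,T;\bs L^2)}$ and, after integrating in time, the $L^2(0,T;\bs V)$ bound, with the exponential factor in \eqref{5} coming from the $\|\uu\|_{L^\oo}^2$ coefficient in Gronwall.

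To upgrade to the $L^\oo(0,T;\bs V)$ and $H^1(0,T;\bs L^2)$ estimates claimed in \eqref{5}, I would differentiate \eqref{2} in time (rigorously, do this at the Galerkin level) and test with $\BB'(t)$, or equivalently test the original equation with $\BB'(t)$ after a suitable reformulation; this produces $\frac12\frac{d}{dt}\|\sqrt{\be}\,\cu\BB\|^2 + \|\BB'\|^2$ on the left, while the right-hand side, after differentiating the nonlinear terms, generates contributions involving $f'$, $\uu'$, $\|\cu\BB\|$, and $\|\BB'\|$; the terms in $f',\uu'$ explain why those norms appear under the exponential in \eqref{5}. The term coming from $\partial_t\!\left(\frac{f}{1+\si|\BB|^2}\BB\right)$ is the delicate one, because the $\BB$-derivative part involves $\BB'$ itself; one checks that $\partial_\BB\!\left(\frac{f}{1+\si|\BB|^2}\BB\right)$ is bounded (the map $\zz\mapsto\frac{\zz}{1+\si|\zz|^2}$ has bounded derivative), so this term is controlled by $C\|f\|_{L^\oo}\|\BB'\|\,$ and absorbed. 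A second Gronwall step then closes the estimate, after which standard weak/weak-$\ast$ compactness and a lower-semicontinuity argument let one pass to the limit in the Galerkin scheme to obtain a solution satisfying \eqref{5}; uniqueness follows by taking the difference of two solutions, testing with the difference, and using the Lipschitz bound on the nonlinearity plus Gronwall once more.

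I expect the main obstacle to be the treatment of the nonlinear term when deriving the higher-order ($\bs V$-level) estimate: one must carefully exploit that both $\zz\mapsto\frac{\zz}{1+\si|\zz|^2}$ and its Jacobian are globally bounded so that no $H^1$-norm of $\BB$ in space (beyond $\cu\BB$) is needed, and arrange the Cauchy–Schwarz/Young splittings so that every occurrence of $\|\BB'\|$ and $\|\cu\BB\|$ on the right is either absorbed to the left or placed inside the Gronwall factor with the correct coefficients $\|f\|_{L^\oo}^2 + \|f'\|_{L^\oo}^2 + \|\uu\|_{L^\oo}^2 + \|\uu'\|_{L^\oo}^2$. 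The boundary/interface conditions \eqref{1.1} and $\cu\BB\times\nn=0$ enter only through the fact that they make the integration by parts producing $(\be\cu\BB,\cu\AAA)$ have no boundary terms, i.e. they are already built into the weak formulation \eqref{2}, so they cause no additional trouble.
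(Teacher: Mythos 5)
Your proposal is correct and is essentially the argument the paper has in mind: the paper gives no proof of this theorem, simply invoking ``a standard argument (cf.\ Sermange and Temam)'', and your Galerkin construction with the two-tier energy estimate (test with $\BB$, then handle the $\bs V$-level bound by moving the time derivative onto the nonlinear terms so that $f'$ and $\uu'$ appear in the Gronwall factor) is precisely that standard argument, with the key observations --- global boundedness of $\zz\mapsto \zz/(1+\si|\zz|^2)$ and of its Jacobian --- correctly identified. No gaps worth flagging.
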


\subsection{Some Useful Mathematical Tools}
We recall below  some lemmas which will be used later.

\begin{lem}\label{lem1}
  (Young's inequality) For any $a, b \in \mathbb{R}$ and $\ve >0$, we have
  $$ab \le \ve a^2 + \frac{1}{4 \ve} b^2.$$
\end{lem}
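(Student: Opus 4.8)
The plan is to reduce everything to the nonnegativity of a perfect square, which is the only nontrivial input. Since $\varepsilon > 0$, the quantity $\sqrt{\varepsilon}$ is a well-defined positive real, so I would introduce the real number $\sqrt{\varepsilon}\,a - \frac{1}{2\sqrt{\varepsilon}}\,b$ and use the elementary fact that the square of any real number is nonnegative. Expanding $\left(\sqrt{\varepsilon}\,a - \frac{1}{2\sqrt{\varepsilon}}\,b\right)^2$ produces exactly $\varepsilon a^2 - ab + \frac{1}{4\varepsilon}b^2$, because the cross term is $-2\cdot\sqrt{\varepsilon}\,a\cdot\frac{1}{2\sqrt{\varepsilon}}\,b = -ab$ and the two outer terms give $\varepsilon a^2$ and $\frac{1}{4\varepsilon}b^2$. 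Thus $\varepsilon a^2 - ab + \frac{1}{4\varepsilon}b^2 \ge 0$, and rearranging this single inequality immediately yields the claimed bound $ab \le \varepsilon a^2 + \frac{1}{4\varepsilon}b^2$.

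The one point worth verifying is that the argument holds for all real $a,b$ regardless of sign. Since we invoke only that a square of a real number is $\ge 0$, no case distinction on the signs of $a$ or $b$ is actually required: the cross term $-ab$ already carries whatever sign $ab$ has, and the completed-square identity is valid identically in $a$ and $b$. As a sanity check one may note the trivial case separately, namely that when $ab \le 0$ the inequality is immediate because the right-hand side is manifestly nonnegative, so only $ab > 0$ would even need the square estimate. I do not anticipate any genuine obstacle here; the sole subtlety is choosing the splitting coefficient $\frac{1}{2\sqrt{\varepsilon}}$ precisely so that the cross term matches $ab$ exactly (a different coefficient would produce a weaker constant), and one may additionally record that equality holds if and only if $\sqrt{\varepsilon}\,a = \frac{1}{2\sqrt{\varepsilon}}\,b$, i.e. $b = 2\varepsilon a$.
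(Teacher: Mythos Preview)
Your argument is correct: expanding $\left(\sqrt{\varepsilon}\,a-\tfrac{1}{2\sqrt{\varepsilon}}\,b\right)^2\ge 0$ is exactly the standard proof, and it covers all real $a,b$ without case distinctions. The paper itself states this lemma without proof, treating it as the well-known Young inequality, so there is nothing to compare against; your write-up would serve perfectly well as a supplied proof.
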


\begin{lem}\label{lem2}
  (Discrete integration by parts) Let $\{\bs{a}_n\}_{n=0}^k$ and
  $\{\bs{b}_{ n}\}_{n=1}^k$ be two vector sequences, then we have
  $$\sum\limits_{n=1}^k (\bs{a}_{n}-\bs{a}_{n-1}) \cdot \bs{b}_n
    =\bs{a}_k \cdot \bs{b}_k -\bs{a}_0 \cdot \bs{b}_1
     -\sum\limits_{n=1}^{k-1} \bs{a}_{n} \cdot (\bs{b}_{n+1}-\bs{b}_{n}).$$
\end{lem}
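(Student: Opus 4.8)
The plan is to establish this Abel summation identity by direct algebraic manipulation: expand the left-hand side, reindex the shifted sum, and peel off the boundary contributions. Both sides are bilinear in the two sequences and the dot product is merely a bilinear pairing, so the vector-valued statement follows componentwise from the scalar one; there is no loss in treating each $\bs{a}_n\cdot\bs{b}_n$ as an ordinary product throughout, and no analytic input (convergence, regularity, or the like) is required.

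First I would expand the difference inside the sum on the left:
\begin{equation*}
\sum\limits_{n=1}^{k}(\bs{a}_n-\bs{a}_{n-1})\cdot\bs{b}_n
 = \sum\limits_{n=1}^{k}\bs{a}_n\cdot\bs{b}_n - \sum\limits_{n=1}^{k}\bs{a}_{n-1}\cdot\bs{b}_n.
\end{equation*}
Then I would reindex the second sum by setting $m=n-1$, so that it becomes $\sum_{m=0}^{k-1}\bs{a}_m\cdot\bs{b}_{m+1}$. The two sums now run over shifted ranges, and the decisive bookkeeping step is to isolate the terms lacking a partner in the other sum: from the first sum I peel off the top index $n=k$, producing $\bs{a}_k\cdot\bs{b}_k$, and from the reindexed second sum I peel off the bottom index $m=0$, producing $\bs{a}_0\cdot\bs{b}_1$. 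What remains are two sums over the common range $n=1,\dots,k-1$, which combine as $\sum_{n=1}^{k-1}\bs{a}_n\cdot(\bs{b}_n-\bs{b}_{n+1})=-\sum_{n=1}^{k-1}\bs{a}_n\cdot(\bs{b}_{n+1}-\bs{b}_n)$. Collecting the three pieces $\bs{a}_k\cdot\bs{b}_k$, $-\bs{a}_0\cdot\bs{b}_1$, and the remaining sum yields exactly the claimed right-hand side.

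An alternative, should one wish to sidestep the index manipulation, is induction on $k$: the base case $k=1$ reads $(\bs{a}_1-\bs{a}_0)\cdot\bs{b}_1=\bs{a}_1\cdot\bs{b}_1-\bs{a}_0\cdot\bs{b}_1$ with an empty trailing sum, and the inductive step consists in adding the single new term $(\bs{a}_{k+1}-\bs{a}_k)\cdot\bs{b}_{k+1}$ and verifying that it reorganizes the boundary and summation terms as required. The main---and essentially only---obstacle is clerical rather than conceptual: one must track the off-by-one index shift carefully and confirm which boundary terms ($\bs{a}_k\cdot\bs{b}_k$ versus $\bs{a}_0\cdot\bs{b}_1$) survive, since a misaligned summation range is the natural place for a stray sign or indexing slip to enter.
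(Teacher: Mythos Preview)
Your proposal is correct and follows essentially the same approach as the paper: the paper simply states that the scalar identity is obtained ``by direct calculation'' and then observes that the vector case follows componentwise, which is exactly your expand--reindex--peel-off-boundary-terms argument spelled out in more detail.
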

\begin{proof} By direct calculation, we easily get
$$\sum\limits_{n=1}^k ({a}_n-{a}_{n-1}) \cdot {b}_n
    ={a}_k \cdot {b}_k -{a}_0 \cdot {b}_1
     -\sum\limits_{n=1}^{k-1} {a}_{n} \cdot ({b}_{n+1}-{b}_{n})$$
for scalar sequences $\{{a}_n\}_{n=0}^k$ and
$\{{b}_n\}_{n=1}^k$. The desired result for vector sequences  can be obtained
accordingly.
\end{proof}

\begin{lem}\label{lem3}
 (Gronwall inequality)
 Let $f \in L^1(t_0, T)$ be a non-negative function,
 $g$ and $\phi$
 be continuous functions on $[t_0, T]$. Moreover
 $g$ is non-decreasing. Then
 $$\phi(t) \le g(t)+\int_{t_0}^t f(\tau)\phi(\tau)\,d\tau
   \ \ \ \forall \ t \in [t_0, T]$$
 implies that
 $$\phi(t) \le g(t)e^{\int_{t_0}^t f(\tau)\,d\tau}
   \ \ \ \forall \ t \in [t_0, T].
 $$
\end{lem}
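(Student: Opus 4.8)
The plan is to convert the integral inequality into a linear differential inequality for the accumulated term $\psi(t):=\int_{t_0}^{t} f(\tau)\phi(\tau)\,d\tau$ and then integrate it against the usual exponential integrating factor, exploiting that $g$ is non-decreasing so that its past values are controlled by $g(t)$.

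First I would observe that, since $\phi$ is continuous on the compact interval $[t_0,T]$ and $f\in L^1(t_0,T)$, the product $f\phi$ lies in $L^1(t_0,T)$; hence $\psi$ is absolutely continuous, $\psi(t_0)=0$, and $\psi'(t)=f(t)\phi(t)$ for almost every $t$. Combining this with the hypothesis $\phi\le g+\psi$ and with $f\ge 0$ gives $\psi'(t)\le f(t)g(t)+f(t)\psi(t)$ for a.e.\ $t\in(t_0,T)$.

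Next I would introduce $\mu(t):=\exp\!\bigl(-\int_{t_0}^{t} f(s)\,ds\bigr)$, which is absolutely continuous, strictly positive, and satisfies $\mu'(t)=-f(t)\mu(t)$ a.e. Multiplying the differential inequality by $\mu(t)>0$ and applying the product rule (valid a.e.\ for the product of two absolutely continuous functions on a compact interval) yields $\frac{d}{dt}\bigl(\mu(t)\psi(t)\bigr)\le f(t)g(t)\mu(t)$ a.e. Integrating from $t_0$ to $t$ and using $\psi(t_0)=0$, $\mu(t_0)=1$, I obtain $\mu(t)\psi(t)\le \int_{t_0}^{t} f(\tau)g(\tau)\mu(\tau)\,d\tau$. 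Since $g$ is non-decreasing, $g(\tau)\le g(t)$ for $t_0\le\tau\le t$, so the right-hand side is at most $g(t)\int_{t_0}^{t} f(\tau)\mu(\tau)\,d\tau = g(t)\int_{t_0}^{t}\bigl(-\mu'(\tau)\bigr)\,d\tau = g(t)\bigl(1-\mu(t)\bigr)$. Dividing by $\mu(t)>0$ gives $\psi(t)\le g(t)\bigl(\mu(t)^{-1}-1\bigr)$, and therefore $\phi(t)\le g(t)+\psi(t)\le g(t)\mu(t)^{-1}=g(t)\exp\!\bigl(\int_{t_0}^{t} f(s)\,ds\bigr)$, which is the assertion.

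I do not expect a genuine obstacle here; the only points needing care are the measure-theoretic bookkeeping — that $f\phi\in L^1$ so that $\psi$ is absolutely continuous with $\psi'=f\phi$ a.e., and that the product rule for $\mu\psi$ holds almost everywhere — and the observation that one must differentiate $\mu\psi$ rather than $\mu(g+\psi)$, because $g$ is only assumed monotone and continuous, not differentiable. Should one prefer to avoid absolute continuity entirely, an alternative is to iterate the hypothesis $n$ times, using $g(\tau)\le g(t)$ at each stage and bounding $\phi$ by $\max_{[t_0,T]}\phi$ inside the $n$-fold remainder integral; the leading terms then form the partial sums of $g(t)\sum_{k\ge 0}\frac{1}{k!}\bigl(\int_{t_0}^{t} f\bigr)^{k}$, while the remainder is $O\!\bigl(\frac{1}{n!}(\int_{t_0}^{T} f)^{n}\bigr)\to 0$, yielding the same bound.
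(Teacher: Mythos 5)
Your proof is correct: the integrating-factor argument applied to $\psi(t)=\int_{t_0}^{t}f\phi$ (rather than to $\phi$ itself, which need not be differentiable) is the classical proof of this integral form of Gronwall's lemma, and your use of $f\mu\ge 0$ together with the monotonicity of $g$ to pull $g(t)$ out of the integral is exactly the step that makes the stated (possibly sign-changing, merely non-decreasing) hypothesis on $g$ sufficient. The paper states this lemma as a known result and gives no proof, so there is nothing to compare against; your measure-theoretic care (that $f\phi\in L^1$, that $\mu\psi$ is absolutely continuous, and that one must differentiate $\mu\psi$ and not $\mu(g+\psi)$) is precisely what a complete writeup would require.
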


\begin{rem} We will frequently use the following special case:
 \begin{eqnarray}
  & f(t) \le C+\al\displaystyle\int_0^t f(s) d s \ \ \
   & \ \mbox{\rm implies that} \ \
    f(t) \le C e^{\al t} \ \ \ \forall \ t \in [0,T],
     \label{dc0.9.3}
\end{eqnarray}
where $\al \ge 0$ and $C$ are given constants.
\end{rem}

\begin{lem} \label{lem4}
  (\cite{MR851383}, p.34) (Integration by parts)
Let $\Om$ be a bounded region of $\mathbb{R}^d$ ($d=2$ or 3)
with Lipschitz continuous boundary. Then, 
the  mapping 
$$\gamma_\tau:
 \left\{
  \begin{array}{ll}
   \vvv \to \vvv \cdot \tau|_{\pa \Om} & \ {\rm for} \ \  d=2,\\
   \vvv \to \vvv \times \nn|_{\pa \Om} & \ {\rm for} \ \  d=3
  \end{array}
 \right.$$
can be extended by continuity to a linear and continuous mapping,
still denoted by $\gamma_\tau$, from $\bs V$ %$H(\mathbf{curl}; \Om)$
 into
$H^{-1/2}(\pa \Om)$ if $d=2$ or $H^{-1/2}(\pa \Om)^3$ if $d=3$,
where $\tau$ is the unit tangent vector to $\pa \Om$. Furthermore,
the following Green's formula holds:
\begin{equation}
 \label{l5}
  (\cu \vvv, \bs \phi)=(\vvv, \cu \bs \phi)
   -\langle\gamma_\tau \vvv, \bs \phi\rangle_{\pa \Om}
\end{equation}
$\forall \ \vvv \in \bs V,
 \forall \ \bs \phi \in H^1(\Om)^3 \ {\rm if} \ d=3
 \ {\rm or} \ \phi \in H^1(\Om) \ {\rm if} \ d=2.$
\end{lem}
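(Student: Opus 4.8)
\emph{Proof proposal.} The plan is to prove the formula \eqref{l5} first for smooth vector fields, then to read off the continuity of $\gamma_\tau$ in the $\bs V$-norm directly from that formula by a duality/lifting argument, and finally to extend everything by density. I will describe the case $d=3$; the case $d=2$ goes through verbatim after replacing $\vvv\times\nn$ by $\vvv\cdot\tau$ and $\cu$ by its two-dimensional (scalar-valued) analogue, with $\bs\phi$ scalar.

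First I would record the classical Green's formula for $\vvv,\bs\phi\in C^\infty(\bar\Om)^3$: starting from the pointwise identity $\di(\vvv\times\bs\phi)=(\cu\vvv)\cdot\bs\phi-\vvv\cdot(\cu\bs\phi)$, applying the divergence theorem on the Lipschitz domain $\Om$, and using $(\vvv\times\bs\phi)\cdot\nn=-(\vvv\times\nn)\cdot\bs\phi$ on $\pa\Om$, one obtains
$$(\cu\vvv,\bs\phi)=(\vvv,\cu\bs\phi)-\int_{\pa\Om}(\vvv\times\nn)\cdot\bs\phi\,dS,$$
which is exactly \eqref{l5} with the boundary term understood as a genuine surface integral. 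Next I would turn this into a bound on the trace. Since $\pa\Om$ is Lipschitz, the trace map $H^1(\Om)^3\to H^{1/2}(\pa\Om)^3$ is bounded and admits a bounded right inverse (lifting) $R$; given $\bs g\in H^{1/2}(\pa\Om)^3$ I set $\bs\phi=R\bs g\in H^1(\Om)^3$ and, for $\vvv\in C^\infty(\bar\Om)^3$, the smooth Green's formula gives
$$\Big|\int_{\pa\Om}(\vvv\times\nn)\cdot\bs g\,dS\Big|=\big|(\vvv,\cu\bs\phi)-(\cu\vvv,\bs\phi)\big|\le C\,\|\vvv\|_{\bs V}\,\|\bs\phi\|_{H^1(\Om)}\le C\,\|\vvv\|_{\bs V}\,\|\bs g\|_{H^{1/2}(\pa\Om)}.$$
Hence $\vvv\mapsto\vvv\times\nn|_{\pa\Om}$ is a bounded linear map from $(C^\infty(\bar\Om)^3,\|\cdot\|_{\bs V})$ into $H^{-1/2}(\pa\Om)^3$.

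To finish, I would invoke the density of $C^\infty(\bar\Om)^3$ in $\bs V=H(\mathbf{curl};\Om)$ — the standard approximation result on a Lipschitz domain, proved by a partition of unity together with local translations and mollifications — so that the bounded operator just constructed extends uniquely and continuously to all of $\bs V$; this extension is the asserted $\gamma_\tau$, and its norm is controlled by the constant $C$ above. For $\vvv\in\bs V$, choosing $\vvv_n\in C^\infty(\bar\Om)^3$ with $\vvv_n\to\vvv$ in $\bs V$ and passing to the limit in the smooth identity — the two $L^2$ inner products converge by Cauchy–Schwarz, the boundary pairing by continuity of $\gamma_\tau$ — establishes \eqref{l5} for every $\vvv\in\bs V$ and every $\bs\phi\in H^1(\Om)^3$.

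The real content, and hence the place where I would lean on the cited reference rather than reprove things, is twofold: the density of smooth fields in $H(\mathbf{curl};\Om)$ and the existence of a bounded trace lifting $R:H^{1/2}(\pa\Om)^3\to H^1(\Om)^3$ on a merely Lipschitz boundary. Both are classical but genuinely use the geometry of $\pa\Om$; once they are granted, the three steps above are entirely routine, and the same scheme with the scalar curl handles $d=2$.
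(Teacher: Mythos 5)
Your argument is correct and is essentially the standard proof of this classical result: the paper itself offers no proof, simply citing Girault--Raviart (\cite{MR851383}, p.~34), where the same three steps appear — the smooth Green's identity from $\di(\vvv\times\bs\phi)$, the duality bound via a bounded trace lifting $R:H^{1/2}(\pa\Om)^3\to H^1(\Om)^3$, and extension by density of smooth fields in $H(\mathbf{curl};\Om)$. You also correctly isolate the two genuinely nontrivial ingredients (density and the lifting on a Lipschitz boundary), so nothing further is needed.
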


\section{The Numerical Scheme}

%We construct in this section a numerical scheme consisting a
%semi-implicit discretization in time and a spectral discretization in
%space.

\subsection{Time Discretization}
 We consider uniform grid on the temporal scale $[0,T]$ with $\tau=\dsf{T}{K}$, and $t_i=i\tau$:
%First we divide the time interval $[0,T]$ into $M$ equally spaced
%subintervals using nodal points
\begin{equation}\label{2.1}
 0=t_0<t_1<\cdots<t_K=T
\end{equation}
Define $u^n=u(\cdot,t_{n})$ for $0 \le n
\le K$. For a given sequence $\{u^n\}_{n=0}^K \subset L^2(\Om)$, we
apply first order approximation via difference quotient and define the averaging term $\bar{u}^n$  as follows:
\begin{equation}\label{2.2}
 \pa_{\tau} u^n=\frac{u^n-u^{n-1}}{\tau}, \ \ \
 \bar{u}^n=\frac{1}{\tau}\int_{t_{n-1}}^{t_{n}} u(\cdot,t) dt, \quad 1\leq n\leq K,
\end{equation}
 and we set $\bar{u}^0=u(\cdot, 0).$

%For a given
%function $u$ defined on $\Om$, we denote $\ti u$ the extension of
%$u$ on $\ti \Om$.

In terms of  time discretization, we consider the following semi-implicit scheme.
For $n=1,2, \cdots, K$, find $\BB^n$ such that satisfies this differential equation
\begin{equation}\label{time1}
  \begin{split}
   \partial_{\tau} \BB^n
    &+ \cu(\bar\be\cu\BB^n)=
  \cu (\bar\be-\be(x) )\cu\BB^{n-1} \\
    &+R_\al \cu (\frac{\bar{f}^n}{1+\si|\BB_{\bs N}^{n-1}|^2}\BB^{n-1})
   +R_m \cu (\bar\uu^n \times \BB^{n-1}),
  \end{split}
\end{equation}
and the boundary conditions
\begin{equation}\label{time2}
\begin{split}
   & \nabla\times\BB^n\times \nn=0, \quad  \mbox{on}  \quad {\pa \Omega},\\
& [\bar \be\cu\BB^n\times \nn ]= [(\bar\be-\be(x))
    \cu\BB^{n-1}\times \nn ], \ [\BB^n ]=0
     \ \ \ \ \mbox{on} \;\; \Gamma_1 \cup \Gamma_2.
\end{split}
\end{equation}
where
$$\bar\be=\left\{
\begin{array}{ll}
 \bar\be_1\triangleq \max\limits_{x \in \Om_1} \be, \ \ & x \in \Om_1,\\
 \bar\be_2 \triangleq \max\limits_{x \in \Om_2} \be=\be,  & x \in \Om_2,\\
 \bar\be_3 \triangleq \max\limits_{x \in \Om_3} \be, \ \ & x \in \Om_3.
 \end{array}
\right.$$
\begin{rem}
Taking the divergence of \eqref{time1}, we find $\pa_\tau\nabla\cdot\BB^n=0$. Hence, $\nabla\cdot\BB^0=0$ implies $\nabla\cdot\BB^n=0$ for all $n\ge 1$. 
\end{rem}

\subsection{Spatial Discretization: Vector Spherical Harmonics(VSH)}

%{\bf Lina:} Add the dimension reduction, and the spectral element
%discretization, specify the space $\VN\subset V$, of the problem here...

For the spatial discretization, we are working with three dimensional variables in spherical region, therefore it is natural to consider basic functions that specifically designed for spherical domain. %we shall use the  VSH for the tangential directions and spectral-element method for the radial direction. 
%\subsubsection{Vector Spherical Harmonics(VSH)}
%We will recall the definition and some useful properties  of VSH.

Let $S$ be a unit sphere and $(r,\theta,\varphi)$ be the spherical coordinates with the moving (right-handed) coordinate basis
\begin{equation}\label{localco}
\begin{split}
&\er=
\big(\sin \theta \cos \varphi, \ \sin\theta \sin \varphi, \cos\theta\big),
\\ & \et=\big(\cos \theta \cos \varphi, \ \cos\theta \sin \varphi, -\sin\theta\big),\\
& \ep= \big(-\sin \varphi, \ \cos \varphi, 0\big).
\end{split}
\end{equation}
The tangential gradient is defined as
\begin{equation}\label{tgrad}
\nabla_S =\frac {\partial}{\partial\theta}\et+\frac 1 {\sin \theta}  \frac {\partial}{\partial\varphi}\ep.
\end{equation}
The spherical harmonic functions are defined via the associated Legendre polynomials:
$$
Y_l^m(\theta,\varphi)=\sqrt{\frac{2l+1}{4\pi}\frac{(l-m)!}{(l+m)!}}P_l^m(\cos \theta) e^{im\varphi}.
$$
Recall that   $\{Y_l^m\}$ form orthonormal basis functions of $L^2(S)$.
% and the set $\{\nabla_s Y_l^m \times \er,\nabla_s Y_l^m \}$ forms an orthogonal basis of the tangential vector space. 
Now we define the vector spherical harmonic functions (VSH) (see, e.g., \cite{Hill54,Nede01}), which form an orthogonal basis of ${\bs L}^2(S)$.
\begin{equation}\label{vsh}
\left\{
\begin{split}
&\spht=\nabla_S Y_l^m \times \er=\frac 1{\sin\theta}\frac{\partial Y_l^m}{\partial \varphi}\et-\frac{\partial Y_l^m}
{\partial \theta} \ep,\quad l\ge 1,\; |m|\le l,\\
&\sphv=(l+1)Y_l^m\er -\nabla_SY_l^m, \quad l\ge 0,\; |m|\le l,\\  
&\sphw=l Y_l^m \er+\nabla_S Y_l^m, \quad l\ge 1,\; |m|\le l,
\end{split}
\right.
\end{equation}
Some additional properties of VSH will be provided in Appendix $A$.

Given the above definitions, for any vector function $\bs F(\theta,\phi)$ defined on the sphere, we can decompose the function using VSH {\blue and some constant coefficients $\bar t_l^m, \bar v_l^m, \bar w_l^m$:} 
 \begin{equation}
{\blue
\bs F(\theta,\varphi)=\sum_{l=0}^\infty\sum_{|m|=0}^l
\Big[\bar t_{l}^m
\spht(\theta,\varphi) +
\bar v_{l}^m \sphv(\theta,\varphi)+\bar w_{l}^m \sphw(\theta,\varphi)\Big].
}
\end{equation}

Considering functions $\bs F(r,\theta, \varphi)$ defined in the three dimensional ball, since the radii direction and the tangential plane are perpendicular to each other, we  can decompose {\blue $\bs F$ using coefficient functions}:
 \begin{equation}
\bs F(r, \theta,\varphi)=\sum_{l=0}^\infty\sum_{|m|=0}^l
\Big[t_{l}^m(r)
\spht(\theta,\varphi) +
v_{l}^m(r) \sphv(\theta,\varphi)+w_{l}^m(r) \sphw(\theta,\varphi)\Big].
\end{equation}
\subsection{Solenoidal Vector Field}
One of the numerical challenge is how to maintain the divergence free property in the discrete case. In the traditionally methods, this usually involves staggered grid \cite{yee1966numerical}, Lagrange multiplier \cite{munz2000divergence} and penalty or projection methods \cite{karakashian1998nonconforming,baker1990piecewise}.

 %Given the special geometry, we can deal with this issue by using a divergence free (i.e., solenoidal) bases, which have been  used mostly in astrophysics \cite{bullard1954homogeneous}.  
{\blue  There exists a divergence free (i.e., solenoidal) basis, which have been used mostly in astrophysics \cite{bullard1954homogeneous}, that can take care of the divergence free condition automatically on the spherical domain. }
 Only till recently, there have been some research and analysis on this subject \cite{tuugluk2012direct} in the mathematical circle. The detailed derivation of the divergence free basis can be found in Appendix B. 
 
 We can expand any solenoidal vector function $\bs B(r,\theta,\varphi)$ as
 \begin{align}
\bs B(r,\theta,\varphi) = \sum_{l=0}^\infty\sum_{|m|=0}^l t_l^m(r) \spht (\theta, \varphi)+ \nabla \times (A_l^m(r) \spht (\theta, \varphi))+a_0^0(r) Y_0^0 {\bs e}_r.
\end{align}
with $\displaystyle d_2^{+} a_0^0(r) = 0$. The term $a_0^0(r) Y_0^0 {\bs e}_r$ will vanish once being applied to a curl operator, so in our problem, we will only consider $a_0^0(r)=0$.

\subsection{Weak formulation of full discretization}
{\blue We mark three intervals on the radial direction $I_1=[0,r_1], I_2=[r_1,r_2], I_3=[r_2,r_3]$, with $r_1, r_2, r_3$ be the radius of surfaces $\Gamma_1, \Gamma_2, \partial \Omega$ respectively. Each $I_i (i=1,2,3)$ is considered as an element on the radius. Let $\mathbb C_{N}$ be the complex polynomial space of degree at most $N$. We define the spectral-element space in the radial direction $X_{N}$ on $I=\{ I_1\cup I_2\cup I_3 \}$ by
\begin{align}
X_{N}=\{u_{N} |_{I_i} \in \mathbb C_{N}: [u]=0, {\rm i.e.}\; u_1(r_1)=u_2(r_1), u_2(r_2) = u_3(r_2)\}.
\end{align}
}
Let $Y_{M}$ be the truncated solenoidal vector field.
%\begin{align}
%Y_M=\text{span}\{ \spht,\sphv,\sphw: 1 \leq l \leq N_2, 0\leq|m|\leq l\}.
%\end{align}
We set ${\bs N}=(N,M)$. For a function $\BB_{\bs N}\in \VN:=X_N\times Y_M$, it can be expanded as
 \begin{align}
\BB_{\bs N}(r,\theta,\varphi) = \sum_{l=0}^M\sum_{|m|=0}^l t_{l,m}^N(r) \spht (\theta, \varphi)+ \nabla \times (A_{l,m}^N(r) \spht (\theta, \varphi)),
\end{align}
with $t_{l,m}^N(r), A_{l,m}^N(r) \in X_N$.

Then,
 our full discrete scheme is as follows: to find $\BB_{\bs N}^n\in \VN$
such that

%\begin{equation}\label{*}
% \begin{split}
%   \displaystyle \pa_\tau\BB_{\bs N}^n 
%    &+\displaystyle\bar\be \cu\cu\BB_{\bs N}^n
%   =\displaystyle \cu[ (\bar\be-\be(x)) (\cu\BB_{\bs N}^{n-1}) ]\\
%    &+R_\al \displaystyle \cu \frac{\bar{f}^n}{1+\si|\BB_{\bs N}^{n-1}|^2}\BB_{\bs N}^{n-1}
%       +R_m \displaystyle\cu (\bar\uu^n \times \BB_{\bs N}^{n-1})
%        \\
%  \end{split}
%\end{equation}
{\blue
\begin{equation}\label{*}
 \begin{split}
   \displaystyle&\int_{\Omega}\pa_\tau\BB_{\bs N}^n \cdot \AAA_{\bs N} d \bs x 
    +\displaystyle\int_{\Omega}\bar\beta (\cu\BB_{\bs N}^n) \cdot (\cu\AAA_{\bs N}) d \xx 
%    {-\int_{\partial \Omega}\bar\beta \cu\BB_{\bs N}^n\times {\bs n} \cdot \AAA_{\bs N} ds}\\
   =\displaystyle\int_{\Omega}(\bar\beta-\beta(x)) (\cu\BB_{\bs N}^{n-1}) \cdot (\cu\AAA_{\bs N}) d \bs x\\
    &+R_\alpha \displaystyle\int_{\Omega}\frac{\bar{f}^n}{1+\sigma|\BB_{\bs N}^{n-1}|^2}\BB_{\bs N}^{n-1}
       \cdot (\cu \AAA_{\bs N}) d \bs x 
%       &-R_{\alpha} \int_{\partial \Omega}\frac{\bar{f}^n}{1+\sigma|\BB_{\bs N}^{n-1}|^2}\BB_{\bs N}^{n-1}\times {\bs n} \cdot \AAA_{\bs N} ds\\
   +R_m \displaystyle\int_{\Omega}(\bar{\bs u}^n \times \BB_{\bs N}^{n-1}) 
    \cdot (\cu \AAA_{\bs N}) d \bs x,
%    &-R_m \displaystyle\int_{\pa\Omega}(\bar{\bs u}^n \times \BB_{\bs N}^{n-1})\times{\bs n}
%    \cdot  \AAA_{\bs N}d s
     \quad \forall \ \AAA_{\bs N} \in \VN,\\
  \end{split}
\end{equation}
}
%\begin{equation}\label{*}
% \begin{split}
%   \displaystyle\int_{\Omega}\pa_\tau\BB_{\bs N}^n \cdot \AAA_{\bs N} d \bs x 
%    &+\displaystyle\int_{\Omega}\bar\beta (\cu\BB_{\bs N}^n) \cdot (\cu\AAA_{\bs N}) d \xx 
%    {-\int_{\partial \Omega}\bar\beta \cu\BB_{\bs N}^n\times {\bs n} \cdot \AAA_{\bs N} ds}\\
%   =&\displaystyle\int_{\Omega}(\bar\beta-\beta(x)) (\cu\BB_{\bs N}^{n-1}) \cdot (\cu\AAA_{\bs N}) d \bs x\\
%    &+R_\alpha \displaystyle\int_{\Omega}\frac{\bar{f}^n}{1+\sigma|\BB_{\bs N}^{n-1}|^2}\BB_{\bs N}^{n-1}
%       \cdot (\cu \AAA_{\bs N}) d \bs x \\
%       &-R_{\alpha} \int_{\partial \Omega}\frac{\bar{f}^n}{1+\sigma|\BB_{\bs N}^{n-1}|^2}\BB_{\bs N}^{n-1}\times {\bs n} \cdot \AAA_{\bs N} ds\\
%   &+R_m \displaystyle\int_{\Omega}(\bar{\bs u}^n \times \BB_{\bs N}^{n-1}) 
%    \cdot (\cu \AAA_{\bs N}) d \bs x\\
%    &-R_m \displaystyle\int_{\pa\Omega}(\bar{\bs u}^n \times \BB_{\bs N}^{n-1})\times{\bs n}
%    \cdot  \AAA_{\bs N}d s
%     \quad \forall \ \AAA_{\bs N} \in \VN,\\
%  \end{split}
%\end{equation}
with
\begin{align}\label{**}
\begin{split}
%&    \nabla \times \BB_{\bs N}^n \times \nn  = 0   \quad \mbox{on} \; \Gamma_3\\
%& [\bar \be\cu\BB_{\bs N}^n\times \nn ]=[(\bar \be-\be(x))\cu\BB_{\bs N}^{n-1}\times \nn ], 
\ [\BB_{\bs N}^n ]=0
     \ \ \ \ \mbox{on} \ \Gamma_1 \cup \Gamma_2
     \end{split}
\end{align}
for $n=1,2, \cdots, K,$ and with initial condition
\begin{equation}\label{2.9}
 \BB_{\bs N}^0 = \Pi_{\bs N} \BB_0(\xx),
\end{equation}
where $\Pi_{\bs N}$ is the projection into the solenoidal vector field.

%
%More precisely, \eqref{time1}-\eqref{time2} is equivalent to \eqref{*}-\eqref{**}. And if we set $\BB_N=\BB^n_N$ , we can rewrite the system as
%\begin{align}\label{varfmNM}
%\begin{split}
%&\alpha<\BB_{\bs N}, {\AAA_{\bs N}}> +\bar \beta <\cu \BB_{\bs N}, \cu {\AAA_{\bs N}}>-[\bar\beta <\cu \BB_{\bs N} \times \nn , {\AAA_{\bs N}}>_S] \\
%=& <{\bf f}_1,{\AAA_{\bs N}}>+<{\bf f}_2, \cu{\AAA_{\bs N}}> -[ <{ \bf f}_2 \times \nn, {\AAA_{\bs N}}>_S], \quad \forall \ \AAA_{\bs N} \in \VN,
%\end{split}
%\end{align}
%\begin{equation}\label{spt2}
%\begin{split}
%   & \nabla\times\BB\times \nn=0, \quad  \mbox{on} \quad   { \Gamma_3},\\
%& [\bar\beta\cu\BB\times \nn ]=[{\bf g}], \ [\BB]=0
%     \ \ \ \ \mbox{on} \ \Gamma_1 \cup \Gamma_2.
%\end{split}
%\end{equation}

%\begin{align}\label{wkt}
%  \begin{split}
%& (\bbt d_r t_l^m,d_r \phi^t_{l,m})_{r^2} +\alpha(t_l^m,\phi^t_{l,m})_{r^2}+ l(l+1)(\bbt t_l^m,\phi_{l,m}^t) \\
%&+ a(\bbt_1-\bbt_2)t_{l}^m(a)\phi^t_{l,m}(a) + b(\bbt_2-\bbt_3)t_{l}^m(b)\phi^t_{l,m}(b) \\%+ c\bbt_3 t_3(c)\phi_3(c)\\
%=&(f^{1,t}_{l,m}, \phi^t_{l,m})_{r^2} + (  \{l f^{2,v}_{l,m} + (l+1) f^{2,w}_{l,m} \}, \phi^t_{l,m})_r + (\{f^{2,v}_{l,m}- f^{2,w}_{l,m}\}, d_r\phi^t_{l,m})_{r^2}\\
%&+ a^2 g^{1,T}_{l,m}\phi^t_{l,m}(a) - b^2 g^{2,T}_{l,m}\phi^t_{l,m}(b).
%  \end{split}
%\end{align}

\section{Stability analysis}
We show in this section that the solution of the  fully discretized scheme remain bounded. 
%{\bf Remark.} $\di \BB_{\bs N} \neq 0$, but $\di \BB_{\bs N} \to 0$ as $N \to \oo$.

\begin{thm}\label{lem2.4}
{\blue Let $\BB_{\bs N}^n$ be the solution of the spectral
method \eqref{*}-\eqref{2.9}. We assume $f \in W^{1,\oo}(0,T;L^\oo(\Om))$ and $\uu \in
 W^{1,\oo}(0,T;\bs{L}^\oo(\Om))$.
% $(\ti\BB_h^n, \ti p_h^n)$ is the extension of $(\BB_h^n, p_h^n)$ over $\ti \Om$.
Then 
there exists positive constants $C$, % and $\delta$, 
 independent of $N$, such that the following inequalities hold.} %for any $\tau < \delta$, we have}
 \begin{eqnarray}
  && \ \ \max\limits_{1 \le n \le M} \|\BB_{\bs N}^n\|^2
   +\tau \sum\limits_{n=1}^M \|\cu \BB_{\bs N}^n\|^2
     \le C (\|\BB^0_{\bs N}\|_{\bs  V}^2 + \tau \|\cu \BB^0_{\bs N}\|^2), \label{thm1}\\
  && \ \ \max\limits_{1 \le n \le M} \|\cu \BB_{\bs N}^n\|^2
    +\tau \sum\limits_{n=1}^M \|\pa_\tau \BB_{\bs N}^n\|^2
     \le C \|\BB^0_{\bs N}\|_{\bs  V}^2. \label{thm2}
 \end{eqnarray}
\end{thm}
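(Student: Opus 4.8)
The plan is to derive both bounds by the standard energy technique: test the discrete weak form \eqref{*} with suitable multiples of $\BB_{\bs N}^n$ and of $\pa_\tau\BB_{\bs N}^n$, each of which belongs to the linear space $\VN$ (incidentally, since the bilinear form $\tau^{-1}(\cdot,\cdot)+(\bar\be\,\cu\cdot,\cu\cdot)$ is coercive on $\VN$, every time step of \eqref{*} is uniquely solvable). Three elementary facts are used repeatedly: (i) $\be_1\le\be(x)\le\be_2$ and, because $\bar\be$ is the zone-wise maximum of $\be$, also $\be_1\le\bar\be\le\be_2$ and $0\le\bar\be(x)-\be(x)\le(1-\de)\bar\be(x)$ with $\de:=\be_1/\be_2\in(0,1]$; (ii) $\bigl|\tfrac{\bs s}{1+\si|\bs s|^2}\bigr|\le|\bs s|$ together with the global Lipschitz estimate $\bigl|\tfrac{\bs a}{1+\si|\bs a|^2}-\tfrac{\bs b}{1+\si|\bs b|^2}\bigr|\le C|\bs a-\bs b|$; (iii) $\|\bar f^n\|_{L^\oo(\Om)}\le\|f\|_{L^\oo(0,T;L^\oo(\Om))}$ and $\|\bar f^{n+1}-\bar f^n\|_{L^\oo(\Om)}\le\tau\|f'\|_{L^\oo(0,T;L^\oo(\Om))}$, and similarly for $\uu$. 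Without loss of generality $\tau\le1$.

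For \eqref{thm1} I would test with $\AAA_{\bs N}=2\tau\BB_{\bs N}^n$. By $2(\bs a-\bs b,\bs a)=|\bs a|^2-|\bs b|^2+|\bs a-\bs b|^2$ the time term produces $\|\BB_{\bs N}^n\|^2-\|\BB_{\bs N}^{n-1}\|^2+\|\BB_{\bs N}^n-\BB_{\bs N}^{n-1}\|^2$ and the principal diffusion term $2\tau\int_\Om\bar\be|\cu\BB_{\bs N}^n|^2$; the explicit diffusion correction is controlled by $2xy\le x^2+y^2$ and $\bar\be-\be\le(1-\de)\bar\be$, giving $\tau(1-\de)\int_\Om\bar\be(|\cu\BB_{\bs N}^{n-1}|^2+|\cu\BB_{\bs N}^n|^2)$, while the $R_\al$- and $R_m$-terms are controlled, via $\bigl|\tfrac{\bar f^n}{1+\si|\BB_{\bs N}^{n-1}|^2}\BB_{\bs N}^{n-1}\bigr|\le|\bar f^n|\,|\BB_{\bs N}^{n-1}|$ and Young's inequality (Lemma~\ref{lem1}), by $\ve\tau\|\cu\BB_{\bs N}^n\|^2+C_\ve\tau(\|\bar f^n\|_{L^\oo}^2+\|\bar\uu^n\|_{L^\oo}^2)\|\BB_{\bs N}^{n-1}\|^2$. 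Choosing $\ve$ small enough that the $\|\cu\BB_{\bs N}^n\|^2$ contributions are absorbed, summing over $n$, telescoping the energy and reindexing the two diffusion sums — which is precisely what deposits the harmless term $\tau\|\cu\BB_{\bs N}^0\|^2$ on the right and leaves a strictly positive multiple of $\tau\sum_n\|\cu\BB_{\bs N}^n\|^2$ on the left — and then applying the discrete analogue of the Gronwall inequality (Lemma~\ref{lem3}: $a_m\le A+\tau\sum_{n=1}^m c_na_{n-1}$ implies $a_m\le A\exp(\tau\sum_{n=1}^m c_n)$, with $\tau\sum_n(\|\bar f^n\|_{L^\oo}^2+\|\bar\uu^n\|_{L^\oo}^2)\le T\max_t(\|f\|_{L^\oo}^2+\|\uu\|_{L^\oo}^2)$), one obtains \eqref{thm1} with $C$ depending only on $T,\be_1,\be_2,R_\al,R_m,\si$ and the $W^{1,\oo}$-in-time norms of $f,\uu$, hence independent of $\bs N$ and $\tau$.

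For \eqref{thm2} I would test with $\AAA_{\bs N}=2\tau\pa_\tau\BB_{\bs N}^n=2(\BB_{\bs N}^n-\BB_{\bs N}^{n-1})$, so the time term is $2\tau\|\pa_\tau\BB_{\bs N}^n\|^2$. Handling the principal and the corrective diffusion terms together by means of $2\bs a\cdot(\bs a-\bs b)=|\bs a|^2-|\bs b|^2+|\bs a-\bs b|^2$ and $2\bs b\cdot(\bs a-\bs b)=|\bs a|^2-|\bs b|^2-|\bs a-\bs b|^2$, the $\bar\be$- and $(\bar\be-\be)$-weighted pieces recombine so that what telescopes is the \emph{physical}-diffusivity energy $\widehat b_n:=\int_\Om\be\,|\cu\BB_{\bs N}^n|^2$, the remaining integrals being nonnegative; thus the left side yields $2\tau\|\pa_\tau\BB_{\bs N}^n\|^2+\widehat b_n-\widehat b_{n-1}$. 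The right side is now $2\int_\Om\GG^n\cdot(\cu\BB_{\bs N}^n-\cu\BB_{\bs N}^{n-1})$ with $\GG^n:=R_\al\tfrac{\bar f^n}{1+\si|\BB_{\bs N}^{n-1}|^2}\BB_{\bs N}^{n-1}+R_m\,\bar\uu^n\times\BB_{\bs N}^{n-1}$, and this is the delicate point: the curl cannot be integrated by parts onto $\GG^n$ (that would demand spatial derivatives of $\BB_{\bs N}^{n-1}$, $f$, $\uu$), nor can the term be estimated directly without forfeiting the $O(\tau)$ factor needed to close the estimate. Instead I would sum over $n$ first and apply the discrete integration by parts of Lemma~\ref{lem2} with $\bs a_n=\cu\BB_{\bs N}^n$ and $\bs b_n=\GG^n$, turning the sum into $\int_\Om\cu\BB_{\bs N}^m\cdot\GG^m-\int_\Om\cu\BB_{\bs N}^0\cdot\GG^1-\sum_{n=1}^{m-1}\int_\Om\cu\BB_{\bs N}^n\cdot(\GG^{n+1}-\GG^n)$. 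The first two terms are handled by Young's inequality, the $\|\cu\BB_{\bs N}^m\|^2$ part being absorbed into $\widehat b_m$ and $\|\GG^m\|,\|\GG^1\|\le C\|\BB_{\bs N}^0\|_{\bs V}$ by \eqref{thm1}; for the remaining sum the key bound is $\|\GG^{n+1}-\GG^n\|\le C\tau(\|\BB_{\bs N}^n\|+\|\pa_\tau\BB_{\bs N}^n\|)$, obtained by splitting each difference into a data-difference part ($O(\tau)$ by (iii)) and a $\BB$-difference part (of norm $\tau\|\pa_\tau\BB_{\bs N}^n\|$, using the Lipschitz bound in (ii)). Young's inequality then dominates $2\int_\Om\cu\BB_{\bs N}^n\cdot(\GG^{n+1}-\GG^n)$ by $\ve\tau\|\pa_\tau\BB_{\bs N}^n\|^2+C_\ve\tau(\|\cu\BB_{\bs N}^n\|^2+\|\BB_{\bs N}^n\|^2)$; summing, absorbing the $\pa_\tau$-terms, and invoking \eqref{thm1} for $\tau\sum_n\|\cu\BB_{\bs N}^n\|^2$ and $\tau\sum_n\|\BB_{\bs N}^n\|^2$, one gets $\tau\sum_{n=1}^m\|\pa_\tau\BB_{\bs N}^n\|^2+\tfrac12\widehat b_m\le C\|\BB_{\bs N}^0\|_{\bs V}^2$ for every $m$, which is \eqref{thm2} since $\widehat b_m\ge\be_1\|\cu\BB_{\bs N}^m\|^2$. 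Note that no second Gronwall argument is needed here, because \eqref{thm1} already supplies the bound on $\tau\sum_n\|\cu\BB_{\bs N}^n\|^2$.

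The hardest part is the one just flagged: the nonlinear and convective forcing tested against $\pa_\tau\BB_{\bs N}^n$ admits neither a spatial integration by parts (too much regularity) nor a direct estimate (loss of the $\tau$ factor), and must be treated by discrete summation by parts in time, which then forces one to use the $W^{1,\oo}$-in-time regularity of $f,\uu$ and the Lipschitz continuity of the saturated nonlinearity $\bs s\mapsto\bs s/(1+\si|\bs s|^2)$. A lesser but still essential point is the coercivity "up to the explicit correction", i.e.\ the inequality $\bar\be(x)-\be(x)\le(1-\de)\bar\be(x)$, which relies on $\bar\be$ being the zone-wise maximum of $\be$ and on $\be\ge\be_1>0$.
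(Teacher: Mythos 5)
Your proposal is correct and follows essentially the same route as the paper: testing with $2\tau\BB_{\bs N}^n$ plus the coercivity-up-to-correction bound on $\bar\be-\be$ and discrete Gronwall for \eqref{thm1}, and testing with $\tau\pa_\tau\BB_{\bs N}^n$, recombining the diffusion terms so that the $\be$-weighted curl energy telescopes, and treating the forcing by the discrete summation by parts of Lemma~\ref{lem2} together with the $W^{1,\oo}$-in-time data regularity and the Lipschitz character of $\bs s\mapsto\bs s/(1+\si|\bs s|^2)$ for \eqref{thm2}. The only cosmetic difference is that you invoke a global Lipschitz bound for the saturated nonlinearity where the paper expands the difference quotient of $\bar f^n\BB_{\bs N}^{n-1}/(1+\si|\BB_{\bs N}^{n-1}|^2)$ algebraically; the estimates are equivalent.
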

\begin{proof} 
Let $0\le \de=\max \dsf{\bar \be-\be}{\bar \be}<1$.
Taking $\AAA_{\bs N}= 2\tau \BB_{\bs N}^n$ in
  \eqref{*}, using the Cauchy-Schwarz inequality, Young inequality and the regularity assumption on $f$ and ${\bs u}$, we can derive
\begin{eqnarray*}
\|\BB_{\bs N}^n\|^2& -&\|\BB_{\bs N}^{n-1}\|^2 +\|\BB_{\bs N}^n -\BB_{\bs N}^{n-1}\|^2
  +2\tau \bar\beta \|\nabla \times \BB_{\bs N}^n\|^2 \\
&\leq & \tau \|\bar\beta \nabla \times \BB_{\bs N}^n\|^2
 +\frac{\tau}{\bar \beta} \int_{\Omega} (\bar \beta -\beta(x))^2
 |\nabla \times \BB_{\bs N}^{n-1}|^2 \,d \xx\\
 && +\frac{\epsilon}{2} \tau \|\bar
 \beta \nabla \times \BB_{\bs N}^n\|^2
  +\frac{2}{\epsilon \bar \beta} \tau R^2_{\alpha}
 \int_{\Omega} \left(\frac{\bar f^n}{1+ \sigma |\BB_{\bs N}^{n-1}|^2}\right)^2 |\BB_{\bs N}^{n-1}|^2
 \,d \xx\\
 &&+\frac{\epsilon}{2}\tau \|\bar \beta \nabla \times \BB_{\bs N}^n\|^2
 +\frac{2}{\epsilon \bar \beta} \tau R_m^2 \int_{\Omega} |\bar \uu^n \times
 \BB^{n-1}_{\bs N}|^2 \,d \xx\\
&\leq & \tau \|\bar \beta \nabla \times \BB_{\bs N}^n\|^2 +\tau
\delta^2\|\bar \beta \nabla \times \BB_{\bs N}^{n-1}\|^2 +\tau \epsilon
\|\bar \beta \nabla \times \BB_{\bs N}^{n}\|^2 +C\tau \|\BB_{\bs N}^{n-1}\|^2,
\end{eqnarray*}
which implies
%and
%$$\bar \be \|\BB\|^2 \triangleq \bar \be_{1} \|\BB\|_{\Om_1}^2
% + \bar \be_{2} \|\BB\|_{\Om_2}^2 + \bar \be_{3} \|\BB\|_{\Om_3}^2. $$
\[
\|\BB_{\bs N}^n\|^2 -\|\BB_{\bs N}^{n-1}\|^2 +\tau (1-\epsilon)
\|\bar\beta \nabla \times \BB_{\bs N}^n\|^2 \leq \tau \delta^2 \|\bar \beta \nabla
\times \BB_{\bs N}^{n-1}\|^2 +C\tau \|\BB_{\bs N}^{n-1}\|^2.
\]
Taking $\epsilon$ small enough such that $1-\epsilon \geq \delta^2
+\epsilon$, i.e., $\epsilon \leq \frac{1}{2} (1-\delta^2)$, we get
\[
\|\BB_{\bs N}^n\|^2 -\|\BB_{\bs N}^{n-1}\|^2 +\tau \epsilon
\|\bar\beta \nabla \times \BB_{\bs N}^n\|^2 + \tau \delta^2 (\|\bar \beta \nabla
\times \BB_{\bs N}^{n}\|^2 - \|\bar \beta \nabla \times \BB_{\bs N}^{n-1}\|^2) \le C\tau \|\BB_{\bs N}^{n-1}\|^2.
\]
Summing up the above relation for $n$ from $1$ to ${M}$, we arrive at
\[
\|\BB^M_{\bs N}\|^2 +\epsilon \tau \sum_{n=1}^M \| \bar\beta \nabla
\times \BB_{\bs N}^n\|^2 \leq \|\BB_{\bs N}^0\|^2 +\tau \delta^2
\|\bar \beta \nabla \times \BB_{\bs N}^0\|^2 +C\tau \sum_{n=0}^{M-1} \|\BB_{\bs N}^n\|^2,
\]
which can also be written as
%\[
%\|\BB_{\bs N}\|^2 +\tau \sum_{n=1}^m \| \nabla \times \BB_{\bs N}^n\|^2 \leq
%C(\|\BB_{\bs N}^0\|^2 +\tau \|\nabla \times \BB_{\bs N}^0\|^2) +C\tau
%\sum_{n=0}^{m-1} \|\BB_{\bs N}^n\|^2.
%\]
{
\[
\|\BB_{\bs N}^M\|^2 +\tau \sum_{n=1}^M \| \nabla \times \BB_{\bs N}^n\|^2 \leq
C(\|\BB_{\bs N}^0\|^2 +\tau \|\nabla \times \BB_{\bs N}^0\|^2) +C\tau
\sum_{n=0}^{M-1} \|\BB_{\bs N}^n\|^2.
\]
}
Applying the {\blue discrete Gronwall's inequality to above inequality}, we find
\[
\max_{1\leq n \leq M} \|\BB_{\bs N}^n\|^2 +\tau \sum_{n=1}^M \| \nabla
\times \BB_{\bs N}^n\|^2 \leq C(\|\BB_{\bs N}^0\|^2 +\tau \|\nabla \times
\BB_{\bs N}^0\|^2)\leq C(\|\BB^0_{\bs N}\|_{\bs  V}^2 +\tau \|\nabla \times \BB^0_{\bs N}\|^2),
\]
which is \eqref{thm1}.

To prove \eqref{thm2}, we take $\AAA_{\bs N}=\tau \pa_\tau
\BB_{\bs N}^n=\BB_{\bs N}^n-\BB_{\bs N}^{n-1}$ in \eqref{*} to
obtain
\begin{eqnarray*}
 \tau \|\pa_{\tau}\BB_{\bs N}^n\|^2
   &+&\displaystyle\int_{\Om}\bar\be (\cu\BB_{\bs N}^n) \cdot \cu (\BB_{\bs N}^n-\BB_{\bs N}^{n-1}) \, d \xx\\
 && -\displaystyle\int_{\Om}(\bar\be-\be) (\cu\BB_{\bs N}^{n-1}) \cdot \cu (\BB_{\bs N}^n-\BB_{\bs N}^{n-1}) \, d \xx\\
 &=& R_\al \displaystyle\int_{\Om}\frac{\bar{f}^n}{1+\si|\BB_{\bs N}^{n-1}|^2}\BB_{\bs N}^{n-1}
      \cdot \cu (\BB_{\bs N}^n-\BB_{\bs N}^{n-1}) \, d \xx\\
 && +R_m \displaystyle\int_{\Om}(\bar\uu^n \times \BB_{\bs N}^{n-1})
    \cdot \cu (\BB_{\bs N}^n-\BB_{\bs N}^{n-1}) \, d \xx.
\end{eqnarray*}
We derive from the above that
\begin{eqnarray*}
 \tau \|\pa_{\tau}\BB_{\bs N}^n\|^2
  &+&\|\bar\beta \nabla \times \BB_{\bs N}^n\|^2
   +\II (\bar \be-\be)|\cu \BB_{\bs N}^{n-1}|^2 \,d \xx\\
 &= & \II (2\bar \be-\be)(\cu \BB_{\bs N}^n ) \cdot (\cu \BB_{\bs N}^{n-1} ) \, d \xx\\
 && + R_\al \displaystyle\int_{\Om}\frac{\bar{f}^n}{1+\si|\BB_{\bs N}^{n-1}|^2}\BB_{\bs N}^{n-1}
      \cdot \cu (\BB_{\bs N}^n-\BB_{\bs N}^{n-1}) \, d \xx\\
 && +R_m \displaystyle\int_{\Om}(\bar\uu^n \times \BB_{\bs N}^{n-1})
    \cdot \cu (\BB_{\bs N}^n-\BB_{\bs N}^{n-1}) \, d \xx\\
 &\leq & \frac{1}{2} \II (2\bar \be-\be)|\cu \BB_{\bs N}^n|^2 d \xx
  +\frac{1}{2} \II (2\bar \be-\be)|\cu \BB_{\bs N}^{n-1}|^2 \, d \xx\\
 && + R_\al \displaystyle\int_{\Om}\frac{\bar{f}^n}{1+\si|\BB_{\bs N}^{n-1}|^2}\BB_{\bs N}^{n-1}
      \cdot \cu (\BB_{\bs N}^n-\BB_{\bs N}^{n-1}) \, d \xx \\
 && +R_m \displaystyle\int_{\Om}(\bar\uu^n \times \BB_{\bs N}^{n-1})
    \cdot \cu (\BB_{\bs N}^n-\BB_{\bs N}^{n-1}) \, d \xx,
\end{eqnarray*}
which can be rewritten as
\begin{eqnarray*}
 \tau \|\pa_{\tau}\BB_{\bs N}^n\|^2
   &+& \II \frac{\be}{2}(|\cu \BB_{\bs N}^{n}|^2-|\cu \BB_{\bs N}^{n-1}|^2) d \xx\\
  & \leq & R_\al \displaystyle\int_{\Om}\frac{\bar{f}^n}{1+\si|\BB_{\bs N}^{n-1}|^2}\BB_{\bs N}^{n-1}
      \cdot \cu (\BB_{\bs N}^n-\BB_{\bs N}^{n-1}) d \xx \\
 && +R_m \displaystyle\int_{\Om}(\bar\uu^n \times \BB_{\bs N}^{n-1})
    \cdot \cu (\BB_{\bs N}^n-\BB_{\bs N}^{n-1}) d \xx.
\end{eqnarray*}
Summing up the above for $n$ from $1$ to $M$ leads to
\begin{eqnarray}\label{*1}
  \tau \sum_{n=1}^M \|\pa_{\tau}\BB_{\bs N}^n\|^2
  & +&\frac{\be_1}{2}\|\cu \BB_{\bs N}^{M}\|^2\non\\
 & \leq & \frac{\be_2}{2}\|\cu \BB_{\bs N}^{0}\|^2
    +R_\al \sum_{n=1}^M \displaystyle\int_{\Om}\frac{\bar{f}^n}{1+\si|\BB_{\bs N}^{n-1}|^2}\BB_{\bs N}^{n-1}
      \cdot \cu (\BB_{\bs N}^n-\BB_{\bs N}^{n-1}) \, d \xx\non\\
 && +R_m \sum_{n=1}^M \displaystyle\int_{\Om}(\bar\uu^n \times \BB_{\bs N}^{n-1})
    \cdot \cu (\BB_{\bs N}^n-\BB_{\bs N}^{n-1}) \, d \xx\non\\
 &\triangleq& \frac{\be_2}{2}\|\cu \BB_{\bs N}^{0}\|^2+I+II.
\end{eqnarray}
Next, we estimate $I$ and $II$ as follows.

By discrete integration by parts (cf. Lemma \ref{lem2}), we have
\begin{eqnarray*}
\sum_{n=1}^M  \frac{\bar{f}^n}{1+\si|\BB_{\bs N}^{n-1}|^2}\BB_{\bs N}^{n-1}
      &\cdot & \cu (\BB_{\bs N}^n-\BB_{\bs N}^{n-1})\\
 &=& \frac{\bar{f}^M \BB_{\bs N}^{M-1}}{1+\si|\BB_{\bs N}^{M-1}|^2} \cdot \cu \BB_{\bs N}^M
   -\frac{\bar{f}^1 \BB_{\bs N}^{0}}{1+\si|\BB_{\bs N}^{0}|^2} \cdot \cu \BB_{\bs N}^0 \\
 && -\sum_{n=1}^{M-1}  \left(\frac{\bar{f}^{n+1}\BB_{\bs N}^n}{1+\si|\BB_{\bs N}^{n}|^2}
      -\frac{\bar{f}^{n}\BB_{\bs N}^{n-1}}{1+\si|\BB_{\bs N}^{n-1}|^2}\right)
       \cdot \cu \BB_{\bs N}^n.
\end{eqnarray*}
Hence, it is easy  to derive from the above that
\begin{eqnarray}\label{*2}
 |I|
 &\le& \dsf{\be_1}{8}\|\cu\BB_{\bs N}^M\|^2+C\|\BB_{\bs N}^0\|_{\bs  V}^2
  +\dsf{\tau}{8}\sum_{n=1}^M\|\pa_\tau \BB_{\bs N}^n\|^2\non\\
 && +\sum_{n=1}^{M-1} \displaystyle\int_{\Om}
    \frac{\left|\si\bar{f}^{n+1}|\BB_{\bs N}^{n-1}|^2\BB_{\bs N}^n-\si\bar{f}^{n}|\BB_{\bs N}^{n}|^2\BB_{\bs N}^{n-1}\right|}
         {(1+\si|\BB_{\bs N}^{n}|^2)(1+\si|\BB_{\bs N}^{n-1}|^2)}
      \cdot |\cu \BB_{\bs N}^n| \, d \xx\non\\
 &\triangleq& \dsf{\be_1}{8}\|\cu\BB_{\bs N}^M\|^2+C\|\BB_{\bs N}^0\|_{\bs  V}^2
  +\dsf{\tau}{8}\sum_{n=1}^m\|\pa_\tau \BB_{\bs N}^n\|^2
   +III.
\end{eqnarray}
Since $f \in W^{1,\oo}(0,T;L^\oo)$,
the term $III$ can be estimated as follows:
\begin{eqnarray*}
 III&=& \sum_{n=1}^{M-1} \displaystyle\int_{\Om}
    \frac{\Big|\si\bar{f}^{n+1}|\BB_{\bs N}^{n-1}|^2\BB_{\bs N}^n-\si\bar{f}^{n}|\BB_{\bs N}^{n}|^2\BB_{\bs N}^{n-1}\Big|}
         {(1+\si|\BB_{\bs N}^{n}|^2)(1+\si|\BB_{\bs N}^{n-1}|^2)}
      \cdot |\cu \BB_{\bs N}^n| \, d \xx\non\\
 &= & \si\sum_{n=1}^{M-1} \displaystyle\int_{\Om}
    \frac{\Big|\bar{f}^{n+1}(\BB_{\bs N}^{n}-\BB_{\bs N}^{n-1})|\BB_{\bs N}^{n-1}|^2+\bar{f}^{n+1}(|\BB_{\bs N}^{n-1}|^2-|\BB_{\bs N}^n|^2)\BB_{\bs N}^{n-1}
          +(\bar{f}^{n+1}-\bar{f}^{n})|\BB_{\bs N}^{n}|^2\BB_{\bs N}^{n-1}\Big|}
         {(1+\si|\BB_{\bs N}^{n}|^2)(1+\si|\BB_{\bs N}^{n-1}|^2)}
       |\cu \BB_{\bs N}^n| \, d \xx\non\\
 &\leq& C\tau \sum_{n=1}^{M-1} \displaystyle\int_{\Om}|\pa_{\tau} \BB_{\bs N}^n||\cu \BB_{\bs N}^n| \, d \xx 
        +C\sum_{n=1}^{M-1} \displaystyle\int_{\Om}
         \frac{\Big|\si\BB_{\bs N}^{n-1}(|\BB_{\bs N}^{n}|+|\BB_{\bs N}^{n-1}|)(|\BB_{\bs N}^{n}|-|\BB_{\bs N}^{n-1}|)\Big|}
          {(1+\si|\BB_{\bs N}^{n}|^2)(1+\si|\BB_{\bs N}^{n-1}|^2)}
       |\cu \BB_{\bs N}^n| \, d \xx \non\\
 && +C \tau \sum_{n=1}^{M-1}  \displaystyle\int_{\Om} |\BB_{\bs N}^{n-1}||\cu \BB_{\bs N}^{n}| \, d \xx,
 \end{eqnarray*}
which can be further estimated by 
\begin{eqnarray*}
 III&\leq & \frac{\tau}{8} \sum_{n=1}^{M-1}\|\pa_\tau \BB_{\bs N}^n\|^2+C \tau  \sum_{n=1}^{M-1} \|\cu\BB_{\bs N}^n\|^2
          +C \sum_{n=1}^{M-1} \displaystyle\int_{\Om}|\tau \pa_{\tau} \BB_{\bs N}^n|
             \frac{\si |\BB_{\bs N}^{n-1}|^2+\frac{\si}{2}(|\BB_{\bs N}^{n}|^2+|\BB_{\bs N}^{n-1}|^2)}{(1+\si|\BB_{\bs N}^{n}|^2)(1+\si|\BB_{\bs N}^{n-1}|^2)}|\cu \BB_{\bs N}^n| \, d \xx\non\\
 && +C \tau  \sum_{n=1}^{M-1} \|\cu\BB_{\bs N}^n\|^2+C \tau \sum_{n=1}^{M-1}\|\BB_{\bs N}^{n-1}\|^2\non\\
 &\leq & \frac{\tau}{8} \sum_{n=1}^{M-1}\|\pa_\tau \BB_{\bs N}^n\|^2+C \tau  \sum_{n=1}^{M-1} \|\cu\BB_{\bs N}^n\|^2
          +2C \sum_{n=1}^{m-1} \displaystyle\int_{\Om}|\tau \pa_{\tau} \BB_{\bs N}^n|
             |\cu \BB_{\bs N}^n| \, d \xx +C \tau \sum_{n=1}^{M-1}\|\BB_{\bs N}^{n-1}\|^2\non\\
 &\le& \dsf{\tau}{4} \sum_{n=1}^M \|\pa_{\tau} \BB_{\bs N}^n\|^2 +C\|\BB_{\bs N}^0\|_{\bs V}^2.
\end{eqnarray*}

Similarly, we can derive 
\begin{equation}\label{*4}
 |II|\le \dsf{\be_1}{8}\|\cu\BB_{\bs N}^M\|^2+C\|\BB_{\bs N}^0\|_{\bs  V}^2
  +\dsf{\tau}{8}\sum_{n=1}^M\|\pa_\tau \BB_{\bs N}^n\|^2.
\end{equation}

Therefore, we obtain from \eqref{*1}-\eqref{*4} that
$$
  \dsf{\tau}{2}\sum_{n=1}^M\|\pa_\tau \BB_{\bs N}^n\|^2
  +\dsf{\be_1}{4}\|\cu\BB_{\bs N}^M\|^2
  \le \dsf{\be_2}{2}\|\cu\BB_{\bs N}^0\|^2+C\|\BB_{\bs N}^0\|_{\bs  V}^2
  \le C\|\BB^0\|_{\bs  V}^2,
$$
which implies the desired result.
\end{proof}

{\blue Note that the above theorem only shows that the scheme is unconditionally stable. However, to obtain accurate approximations, one still needs to choose a time step, which should depend on physical parameters $R_m,\;R_{\alpha}$ and $\beta_{\xx}$,  sufficiently small so that the  dynamical behavior  can be corrected captured.
With the above stability result, one can follow a  standard, albeit tedious, procedure to derive
an error estimate by assuming further regularity on the solution. For the sake of brevity, we leave this to the interested reader.}

\section{Numerical Implementation}

We will describe the details in numerical implementation in this section. it is natural to apply a spectral element treatment to the expansion, to accommodate the phenomenon in three different domains. 
%For the magnetic field $\bs B(r,\theta,\phi)$
%\subsection{Detailed Numerical Scheme}
We present  the expansion in terms of  Heaviside step function $u_I$. 
\begin{align}\label{ExpanNM}
&\bs B(r,\theta,\varphi) = \sum_{i=1}^3\sum_{l=0}^\infty \sum_{|m|= 0}^{l} u_{I_i}\Big[  t_{i,l,m}(r) \spht(\theta,\varphi) + \nabla\times \Big( A_{i,l,m}(r) \spht(\theta,\varphi) \Big) \Big],
%&\bs A_{\bs N} (r, \theta, \varphi) = \sum_{l=0}^M \sum_{|m|=0}^{l} \Big [ \phi_{l,m}^t(r) \spht(\theta,\varphi) + \nabla \times \Big(\phi_{l,m}^A(r) \spht(\theta,\varphi) \Big)\Big],
\end{align}
where $\{ t_{i,l,m}, A_{i,l,m}\} \in \mathbb{C}_N(I_i)$. 

Under this expansion, one can find two fully decoupled systems for $t_{i,l,m}$ and $A_{i,l,m}$. And immediately the three dimension problem is reduced into a system of one dimension problems. 

We first define some notations for cleaner form. Denote: 
\begin{align}
& \alpha = \frac{1}{\tau},\quad {\bs f}_1 = \alpha \BB^{n-1},\\
&{\bs f}_2 = (\bar\beta-\beta(x) )\cu\BB^{n-1}
    +R_\alpha \frac{\bar{f}^n}{1+\sigma|\BB^{n-1}|^2}\BB^{n-1}
   +R_m (\bar{\bs u}^n \times \BB^{n-1}),\\
&{\bs g}= \left\{
\begin{array}{ll}
{\bs g}_1=(\bar\beta_1 - \beta_1(x)) \cu \BB^{n-1} \times \nn, & x\ \rm{on} \ \Gamma_1,\\
{\bs g}_2=-(\bar\beta_3 - \beta_3(x)) \cu \BB^{n-1} \times \nn, & x\ \rm{on} \ \Gamma_2.
\end{array}
\right.
\end{align}
Equation (\ref{time1}) can then be written in this form:
\begin{align}
\alpha \BB^n +\bar\beta \nabla\times\nabla \times \BB^n = \bs f_1+\nabla \times \bs f_2,
\end{align}
with the boundary conditions
\begin{equation}
\begin{split}
   & \nabla\times\BB^n\times \nn=0, \quad  \mbox{on}  \quad {\pa \Omega},\\
& [\bar \be\cu\BB^n\times \nn ]= \bs g, \ [\BB^n ]=0
     \ \ \ \ \mbox{on} \;\; \Gamma_1 \cup \Gamma_2.
\end{split}
\end{equation}
We apply the harmonic vector spherical analysis to $\bs f_1, \bs f_2$ and $\bs g$. It is clear that $\bs f_1$ is also in the solenoidal field, but it can still be expanded with the full dimension analysis. 
\begin{align}
&\bs f_1(r,\theta,\varphi) = \sum_{i=1}^3\sum_{l=0}^\infty \sum_{|m|= 0}^{l} u_{I_i}\Big[  f_{i,l,m}^{1,T} \spht + f_{i,l,m}^{1,\nabla_S} \nabla_S Y_l^m + f_{i,l,m}^{1,r} {\bs e}_r\Big],\\
&\bs f_2(r,\theta,\varphi) = \sum_{i=1}^3\sum_{l=0}^\infty \sum_{|m|= 0}^{l} u_{I_i}\Big[  f_{i,l,m}^{2,T} \spht + f_{i,l,m}^{2,\nabla_S} \nabla_S Y_l^m + f_{i,l,m}^{2,r} {\bs e}_r\Big],\\
&\bs g_i(\theta,\varphi)= \sum_{l=0}^\infty \sum_{|m|= 0}^{l} \Big[  g_{i,l,m}^{T}(r) \spht(\theta,\varphi) + g_{i,l,m}^{\nabla_S}\nabla_SY_{l,m}(\theta,\varphi) \Big],
\end{align}
\subsection{Decoupled System of Equations}
For notational convenience, we define the following operators:
$$
d_l^+=\frac{d}{dr}+\frac{l}{r},\quad d_l^-=\frac{d}{dr}-\frac{l}{r},
$$
$$\displaystyle \mathcal{L}_l = \frac{l(l+1)}{r^2}- \frac{d^2}{dr^2}-\frac{1}{r}\frac{d}{dr}.$$ 
The strong form of the decoupled  reduced differential equations is presented below. Detailed derivation of the strong form can be found in appendix C.

The system to solve $t_{i,l,m}(r)$ is:
 \begin{align}
&\alpha t_{i,l,m} + \beta \mathcal{L}_l (t_{i,l,m}) = f_{i,l,m}^{1,T} + \frac{f_{i,l,m}^{2,r}}{r} - \frac{1}{r} \frac{\partial (r f_{i,l,m}^{2,\nabla_S})}{\partial r},  \;\; {\rm in} \; \; I_i,\\
& t_{1,l,m}({\blue r_1})=t_{2,l,m}({\blue r_1}), \; \; t_{2,l,m}({\blue r_2})=t_{3,l,m}({\blue r_2}), \\
& \bar{\beta}_1 d_1^+ t_{1,l,m} ({\blue r_1}) - \bar{\beta}_2 d_1^+ t_{2,l,m} ({\blue r_1}) = g^{T}_{1,l,m},\\
& \bar{\beta}_3 d_1^+ t_{3,l,m} ({\blue r_2}) - \bar{\beta}_2 d_1^+ t_{2,l,m} ({\blue r_2}) = g^{T}_{2,l,m};\\
&  d_1^+t_{3,l,m} ({\blue r_3})=0,
\end{align}
And the system to solve $A_{i,l,m}(r)$ is:

\begin{align}
&\alpha ( r A_{i,l,m}(r))' + \bar\beta \Big( r \mathcal{L}A_{i,l,m}(r) \Big)'= r f_{i,l,m}^{1,\nabla_S}(r)
%\Big(r \frac{ rf_{i,l,m}^{1,r}(r)}{l+1}\Big)' 
+  (r f_{i,l,m}^{2,T}(r))', \\
&A_{1,l,m}({\blue r_1})=A_{2,l,m}({\blue r_1}), \; \; A_{2,l,m}({\blue r_2})=A_{3,l,m}({\blue r_2}),\\
&A_{1,l,m}'({\blue r_1})=A_{2,l,m}'({\blue r_1}), \; \; A_{2,l,m}'({\blue r_2})=A_{3,l,m}'({\blue r_2}),\\
&\bar{\beta}_2 \mathcal{L}(A_{2,l,m}({\blue r_1}))-\bar{\beta}_1 \mathcal{L}(A_{1,l,m}({\blue r_1}))=g^{\nabla_S}_{1,l,m},\\
&\bar{\beta}_2 \mathcal{L}(A_{2,l,m}({\blue r_2}))-\bar{\beta}_3 \mathcal{L}(A_{3,l,m}({\blue r_2}))=g^{\nabla_S}_{2,l,m},\\
&\mathcal{L}_l (A^3_{l,m})({\blue r_3}) = 0.
\end{align}

The solution space $X_N$ can be expanded from basis constructed from Legender polynomials:
\begin{align*}
&  \phi_k=(L_{k-1}-L_{k+1},0,0),\quad k=1,\dots,n-1\\
&\phi_{k+N-1}=(0,L_{k-1}-L_{k+1},0),\quad k=1,\dots,n-1\\
&\phi_{k+2N-2}=(0,0,L_{k-1}-L_{k+1}),\quad k=1,\dots,n-1\\
&\phi_{3N-2}=(-\frac{x}{2}+\frac{1}{2},0,0),\quad
\phi_{3N-1}=(\frac{x}{2}+\frac{1}{2},-\frac{x}{2}+\frac{1}{2},0),\\
&\phi_{3N}=(0,\frac{x}{2}+\frac{1}{2},-\frac{x}{2}++\frac{1}{2}),\quad
\phi_{3N+1}=(0,0,\frac{x}{2}+\frac{1}{2}).
\end{align*}
Notice this $\phi(x)$ has domain $x\in[-1,1]$ in each subdomain, we can convert it to the function by change of variable to $\phi(r)$, such that $r\in I_i$. In other words, 
$$
t_{l,m}^N(r)=\sum_{k=1}^{3N+1} u_k \phi_k(r),\quad A^N_{l,m}(r)=\sum_{k=1}^{3N+1} v_k\phi_k(i)
$$
We then plug back the expansions (\ref{ExpanNM}) into equation {\blue(\ref{*})}. Denote $(u,v)_\omega$ as the weighted integral over three domains $\ds \sum_{i=1}^3\int_{I_i}uv\omega dr$, and use $t_{l,m}, A_{l,m}$ as the piecewise function with function value $t_{i,l,m}, A_{i,l,m}$ respectively in $I_i$. Then the weak formulation of the reduced dimension system becomes: to find   $t_{l,m}^N(r), A_{l,m}^N(r)$, such that 
for $\phi(r)\in X_N$:
\begin{align}\label{wkt}
  \begin{split}
&\alpha(t_{l,m}^N,\phi)_{r^2}+(\bbt d_r t_{l,m}^N,d_r \phi)_{r^2} + l(l+1)(\bbt t_{l,m}^N,\phi) + {\blue r_1}(\bbt_1-\bbt_2)t_{l,m}^N({\blue r_1})\phi({\blue r_1}) \\
&+ {\blue r_2}(\bbt_2-\bbt_3)t_{l,m}^N({\blue r_2})\phi({\blue r_2}) 
+{\blue r_3}\bbt_3 t^N_{l,m}({\blue r_3})\phi({\blue r_3})\\
=&(\Pi f^{1,T}_{l,m}, \phi)_{r^2} + (\Pi f^{2,T}_{l,m} \phi)_r-(\Pi(d_r(rf^{2,\nabla_S}_{l,,m})),\phi)_r+ {\blue r_1}^2 g^{1,T}_{l,m}\phi^t_{l,m}({\blue r_1}) - {\blue r_2}^2 g^{2,T}_{l,m}\phi^t_{l,m}({\blue r_2}).
  \end{split}
\end{align}
\begin{align}\label{wkA}
\begin{split}
&\alpha(d_rA^N_{l,m},\phi)_{r^3} + \alpha (A^N_{l,m}\phi)_{r^2} + \bar\beta l(l+1) [(d_r A^N_{l,m},\phi)_r-(A^N_{l,m},\phi)] \\
&-\bar\beta [(d_rA^N_{l,m},\phi'')_{r^3}+2 (d_rA^N_{l,m},d_r \phi)_{r^2}]
+\bar\beta_3 {\blue r_3}^2(A^N_{l,m})'({\blue r_3})({\blue r_3}\phi'({\blue r_3})+2\phi({\blue r_3})) \\
&+(\bar\beta_2- \bar\beta_3)(A^N_{l,m})'({\blue r_2})({\blue r_2}\phi'({\blue r_2})+2\phi({\blue r_2}))+(\bar\beta_1-\bar\beta_2)(A^N_{l,m})'({\blue r_1})({\blue r_1}\phi'({\blue r_1})+2\phi({\blue r_1}))\\
&-{\blue r_3}l(l+1)\bar\beta_3 A^N_{l,m}({\blue r_3})\phi({\blue r_3}) 
+{\blue r_2}({\blue r_2}^2g_2^{\nabla_S}-l(l+1)(\bar\beta_2 -\bar\beta_3) A^N_{l,m}({\blue r_2}) ) \phi({\blue r_2})\\
&+{\blue r_1}(l(l+1)(\bar\beta_2-\bar\beta_1)A^N_{l,m}({\blue r_1})-{\blue r_1}^2g_1^{\nabla_S})\phi({\blue r_1})\\
=&( (I_Nf)', \phi )_{r^3} + ( I_Nf ,\phi )_{r^2}.
\end{split}
\end{align}

{\blue
Although we only discussed a first-order time marching scheme for brevity, it is clear that a similar second-order scheme based on backward difference formula and Adam-Bashforth extrapolation for nonlinear terms can be constructed, and it is expected that similar stability result can also be established.  
}

\section{Numerical Results}

Now we perform some numerical tests to validate our code. 

We consider an application to a solar interface dynamo  as in \cite{Chan06} where a finite-element method is used.
 The domain $\Omega$, composed of inner core $\Omega_1$, convection zone $\Omega_2$,
and exterior region $\Omega_3$, with the interfaces at $r_1=1.5$, $r_2 = 2.5$, $r_3=7.5$.
The magnetic diffusivity $\beta_i(x)$ is a  constant in each zone, namely $\{1,1,150\}$.
In the convection zone, the tachocline is located at $r_t=1.875$.
We set 
\begin{align}
f(x,t)= \sin^2\theta \cos \theta \sin \bigr[ \pi \frac{r-r_t}{r_2-r_t}\bigr],
\end{align}
which represents alpha quenching lies in between the tachocline and outer surface of convection zone; and
 take 
\begin{align}
&{\bf u} = (0,0,\Omega_t(\theta)r\sin\theta \sin \bigr[ \pi\frac{r-r_1}{r_t-r_1}\bigr]),\\
&\Omega_t(\theta) = 1 - 0.1642 \cos^2 \theta - 0.1591 \cos^4 \theta,
\end{align}
which represents a solar-like internal differential rotation in between the tachocline and the inner surface of convection zone.

The initial condition is given by 
\begin{align}
&{\bf B}_r = 2\cos \theta r(r-r_2)^2/r_2^2,\\
&{\bf B}_{\theta} = - \sin \theta (3r(r-r_2)^2 + 2r^2(r-r_2))/r_2^2,\\
&{\bf B}_{\varphi} = 3 \cos \theta \sin \theta r^2 (r-r_2)^2/r_2^2,
\end{align}
which is non-zero  only in the inner core and convection zone.

\begin{figure}[h]
\centering
\includegraphics[scale=.9]{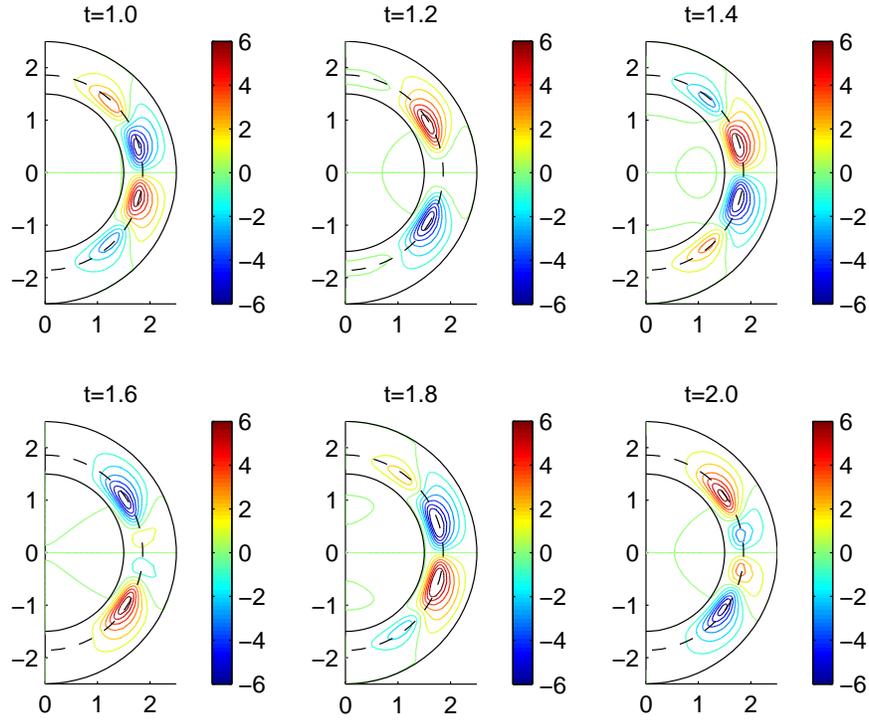}
\caption{Rm=100. Contours of the azimuthal field $B_{\varphi}$ in a meridional plane at different time.}
\label{contour}
\end{figure}

In the first simulation, we take $R_{\alpha} = 30$, $R_m = 100$, and plot in Figure \ref{contour}  the contours of azimuthal field $B_{\varphi}$ in a meridional plane. In this simulation, we take  $\delta t = \frac{1}{16000}$ with 40 equal spaced points for latitude and 40 equal spaced points for longitude, and 20 Legendre-Gaussian-Lobatto points in each layer. In Figure \ref{but}, we show the butterfly-shaped profile on the tachocline, where the function $f$ and internal differential rotation $u$ meet.
\begin{figure}[htp]
\centering
\includegraphics[scale=.5]{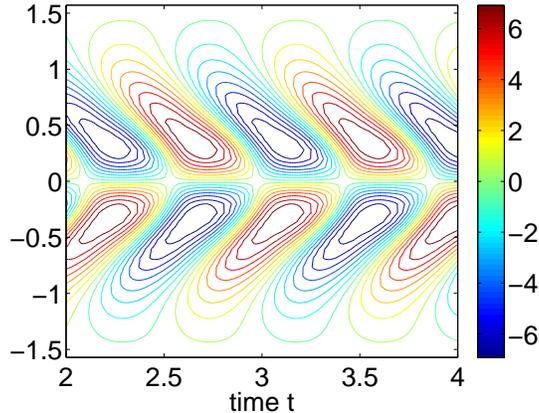}
\caption{Butterfly diagram of azimuthal field at the interface at tachocline}
\label{but}
\end{figure}

In the second simulation, we keep $R_{\alpha} = 30$ but take $R_m=1000$. and plot the contours of azimuthal field $B_{\varphi}$ in Figure \ref{contour2}. We observe similar quasi-periodic patterns as the previous example. The large $R_m$ leads to a significant increase of the magnitude.% Accordingly, a smaller time step, $\delta t=??$, has to be used.

In Figure \ref{energy}, we plot magnetic energy $E_m = \int_{\Omega} |B|^2 dx$ for $R_{\alpha}=30$ with different $R_m$. These results are consistent with those reported in \cite{Chan06}.

\begin{figure}[htp]
\includegraphics[scale=.9]{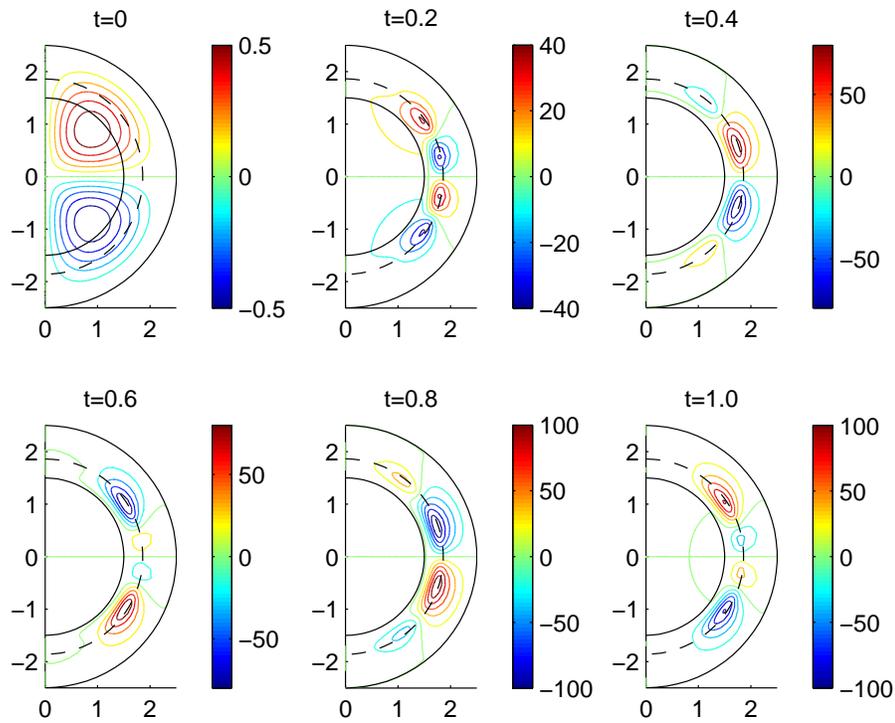}
\caption{Rm=1000. Contours of the azimuthal field $B_{\varphi}$ in a meridional plane at different time.}
\label{contour2}
\end{figure}

\begin{figure}[htp]
\centering
\includegraphics[scale=.5]{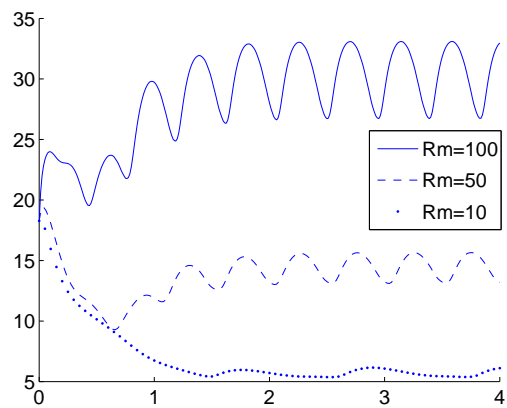}
\caption{Energy for $R_{\alpha}=30, R_m=10, 50, 100$.}
\label{energy}
\end{figure}

%{\color{red}
%Results evaluating divergence of each domain are attached below in figure (\ref{div}). The first 3 figures are all for Rm=50, number of time steps are all 20000, stopping time is t=4,2,1 respectively, so by decreasing the dt, the result is quite consistent. The 4th graph is for Rm=100. 
%\begin{figure}\label{div}
%\includegraphics[scale=.4]{div50}
%\includegraphics[scale=.4]{div50t2}\\
%\includegraphics[scale=.4]{div50t1}
%\includegraphics[scale=.4]{div100}
%\end{figure}
%
%}

\section{Concluding remarks}
We developed in this paper an efficient numerical scheme for the 3D mean-field spherical dynamo equation. For the time discretization, we adopt a special  semi-implicit discretization in such a way that at each time step one only needs to solve a linear system  with piecewise constant coefficients. To deal with the divergence-free constraint, we use the divergence free vector spherical harmonic functions in space so that our numerical solution is automatically divergence-free. In addition, this allows us to reduce the linear system to be solved at each time step to a  sequence of one-dimensional equations in the radial direction, which can then be  solved by using a  spectral-element method. Hence, the overall scheme is very efficient and accurate.

  We showed  that the solution of  our fully discretized scheme remains bounded independent of the number of unknowns, and presented several 
numerical results to validate our scheme.

\section{Acknowlegdegement} 

The research of T. Cheng is supported by NSF of China DOS 11871240  and DOS 11771170. The research of L. Ma is partially supported by NSF DMS-1913229. The research of J. Shen  is partially supported by NSF DMS-1620262,  DMS-1720442 and AFOSR FA9550-16-1-0102.
\appendix

%Important formula using vector spherical harmonic functions are presented in this section. 

\section{Vector Spherical Harmonic basis}
For VSH defined in (\ref{vsh})
\begin{align*}\label{vectcurl}
&\nabla \times \big(f \sphv\big)=\big(d_{l+2}^+ f\big)\spht,\\
&\nabla \times \big(f \sphw\big)=-\big(d_{l-1}^- f\big)\spht,\\
&\nabla \times (f \spht)=\frac{l(l+1) f}{r} Y_l^m {\bs e}_r+ \frac{1}{r}\frac{d(rf)}{d r} \nabla_S Y_l^m\\
& (2l+1)\nabla \times \big(f \spht\big)=(l+1)\big(d_{l+1}^+ f\big)\sphw -l\big(d_{l}^- f\big)\sphv,\\
&\nabla\times\nabla\times(f(r)\spht) =\left( \frac{l(l+1)f}{r^2} - \frac{2f'}{r} -{f''}\right) \spht = \mathcal{L}(f)\spht.
\end{align*}
\section{Representation of the Solenoidal Vector Field}
We now seek the representation of the divergence free space, in other words, the Solenoidal vector field. We know that $\nabla \cdot {\bf curl}_S Y_l^m = 0$, only need to see the other two sets.

Suppose we have a vector $\bs u$ represented by both sets of basis: 
\begin{align}%\label{abvw}
\begin{split}
\bs u = &\sum_{l,m} a_l^m(r) Y_l^m {\bs e}_r + b_l^m(r) \nabla_S Y_l^m\\
= & \sum_{l,m} v_l^m(r) \sphv + w_l^m(r) \sphw
\end{split}
\end{align}

Given identities
\begin{align}
&\nabla \cdot (f \spht) = 0, \quad
\nabla \cdot (f \sphv) = (l+1) d_{l+2}^{+} f Y_l^m,\\
&\nabla \cdot (f \sphw) = l d_{l-1}^{-} f Y_l^m,
\end{align}
Divergence of the vector $\bs u$ given expansion under basis $\spht, \sphv, \sphw$ is
\begin{align}
\nabla \cdot \bs u = \sum_{l,m} [(l+1)d_{l+2}^{+} v_l^m + l d_{l-1}^{-} w_l^m] Y_l^m
\end{align}

We know that if $u$ is divergence free, it must obey the following relation, 
\begin{align}
(l+1)d_{l+2}^{+} v_l^m + l d_{l-1}^{-} w_l^m = 0, \quad \forall l,m>0,
\end{align}
for $l=0$, we only have $\bs V_0^0$, therefore
\begin{align}
d_2^+(a_0^0(r)) = 0.
\end{align}

Now we take divergence on $\bs u$.
\begin{align}
\begin{split}
\nabla \cdot \bs u =& \sum_{l,m} \frac{1}{r^2} \frac{\partial(r^2 a_l^m)}{\partial r} Y_l^m+ \frac{1}{r\sin\theta} \frac{\partial}{\partial\theta} \left( \sin\theta b_l^m(r) \frac{\partial Y_l^m}{\partial\theta}\right) + \frac{1}{r\sin\theta} \frac{\partial}{\partial\varphi} \left( \frac{b_l^m(r)}{\sin\theta} \frac{\partial Y_l^m}{\partial \varphi} \right)\\
=& \sum_{l,m} \frac{1}{r^2} \frac{\partial(r^2 a_l^m)}{\partial r} Y_l^m+ \frac{b_l^m(r)}{r\sin\theta} \frac{\partial}{\partial \theta} \left( \sin\theta \frac{\partial Y_l^m}{\partial \theta} \right) + \frac{b_l^m(r)}{r\sin^2\theta} \frac{\partial^2Y_l^m}{\partial\varphi^2}\\
=&\sum_{l,m} \frac{1}{r^2} \frac{\partial(r^2 a_l^m)}{\partial r} Y_l^m+ \frac{b_l^m(r)}{r} \Delta_S Y_l^m
= \sum_{l,m} \frac{1}{r^2} \frac{\partial(r^2 a_l^m)}{\partial r} Y_l^m - l(l+1)\frac{b_l^m(r)}{r} Y_l^m
\end{split}
\end{align}

So we know for the solenoidal field, we need to have, $\forall l,m$,
\begin{align}
r (a_l^m)' + 2 a_l^m - l(l+1) b_l^m = 0
\end{align}
Consider the relations between $\{a_l^m, b_l^m\}$ and $\{v_l^m, w_l^m\}$ in (\ref{vectcurl})
\begin{align}
(l+1) v_l^m + lw_l^m = a_l^m,\quad w_l^m-v_l^m = b_l^m\\
v_l^m=\frac{a_l^m-lb_l^m}{2l+1},\quad w_l^m = \frac{a_l^m + (l+1)b_l^m}{2l+1}
\end{align}

So the coefficient for $\sphv, \sphw$ should be:
\begin{align}
v_l^m =- \frac{r}{(2l+1)(l+1)} d_{l-1}^- (a_l^m), \quad w_l^m = \frac{r}{l(2l+1)} d_{l+2}^+ (a_l^m)
\end{align}
or,
\begin{align}
v_l^m =- \frac{1}{(2l+1)(l+1)} d_{l}^- (ra_l^m), \quad w_l^m = \frac{1}{l(2l+1)} d_{l+1}^+ (ra_l^m)
\end{align}
Let $A_l^m(r) = \frac{r}{l(l+1)}a_l^m(r)$,
and notice the identities:
\begin{equation}\label{vectcurl}
\begin{split}
&{\bf curl}\big(f \sphv\big)=\big(d_{l+2}^+ f\big)\spht,\quad
{\bf curl}\big(f \sphw\big)=-\big(d_{l-1}^- f\big)\spht,\\
& (2l+1){\bf curl}\big(f \spht\big)=(l+1)\big(d_{l+1}^+ f\big)\sphw -l\big(d_{l}^- f\big)\sphv.
\end{split}
\end{equation}

Therefore we can rewrite $\bs u$ as:
\begin{align}
\bs u = {\bf curl} (\sum_{l,m}A_l^m(r) \spht)
\end{align}

%Noted here, $\bs u$ can also be represented as 
%\begin{align}
%\bs u = {\bf curl} (\sum_{l,m}A_l^m(r) \spht + \nabla f)
%\end{align}
%for any smooth function $f$. 
%%Lets expand $f=\sum_{l,m} f_l^m(r) Y_l^m$, then \[\nabla f= \sum_{l,m} (f_l^m)'Y_l^m {\bs e}_r + \frac{f_l^m}{r}\nabla_SY_l^m\]
%%Take $\bf curl$, and we get,
%%\begin{align}
%%{\bf curl} \nabla f = \sum_{l,m} \frac{f'}{r} \spht + 0 {\bs e}_r + \frac{f'}{r} (-\frac{1}{\sin\theta} \frac{\partial Y_l^m}{\partial \varphi} {\bs e}_{\theta} + \frac{\partial Y_l^m}{\partial \theta} {\bs e}_{\varphi})=\bs 0
%%\end{align}
%%
%However, this will non uniqueness will not affect the result, since $\spht$ is orthogonal to the space spanned by $\{{\bs e}_r, \nabla_S Y_l^m\}$, so the representation of $A_l^m(r)$ is still unique.

Now we know for any $\bs u$ in a solenoidal field, we can expand it as:
\begin{align}
\bs u = \sum_{l,m} t_l^m(r) \spht + \nabla \times (A_l^m(r) \spht) + a_0^0(r) Y_0^0 {\bs e}_r
\end{align}
with $\displaystyle d_2^{+} a_0^0(r) = 0$. For most practical cases, $a_0^0(r)$ is zero. 

\section{Derivation of the Strong Form for Solenoidal Vector Field}
We will give detailed derivation of the strong form in the solenoidal expansion in this section. The system is: 
\begin{align}
\alpha \bs B^n + \bar\beta\nabla\times \nabla\times B^n = \bs f_1+\nabla \times \bs f_2, \quad {\rm in }\;\; \Omega,
\end{align}
with the boundary conditions
\begin{equation}
\begin{split}
   & \nabla\times\BB^n\times \nn=0, \quad  \mbox{on}  \quad {\pa \Omega},\\
& [\bar \be\cu\BB^n\times \nn ]= \bs g, \ [\BB^n ]=0
     \ \ \ \ \mbox{on} \;\; \Gamma_1 \cup \Gamma_2.
\end{split}
\end{equation}
The expansions for functions involved are:
\begin{align}
&\bs B_{\bs N} (r,\theta,\varphi) = \sum_{i=1}^3\sum_{l=0}^\infty \sum_{|m|= 0}^{l} u_{I_i}\Big[  t_{l,m}^i(r) \spht(\theta,\varphi) +  \nabla\times \Big( A_{l,m}^i(r) \spht(\theta,\varphi) \Big) \Big],
%&\bs A_{\bs N} (r, \theta, \varphi) = \sum_{l=0}^M \sum_{|m|=0}^{l} \Big [ \phi_{l,m}^t(r) \spht(\theta,\varphi) + \nabla \times \Big(\phi_{l,m}^A(r) \spht(\theta,\varphi) \Big)\Big],
\end{align}

\begin{align}
&\bs f_1(r,\theta,\varphi) = \sum_{i=1}^3\sum_{l=0}^\infty \sum_{|m|= 0}^{l} u_{I_i}\Big[  f_{i,l,m}^{1,T} (r)\spht(\theta,\varphi) + f_{i,l,m}^{1,\nabla_S}(r) \nabla_S Y_l^m(\theta,\varphi) + f_{i,l,m}^{1,r} (r){\bs e}_r\Big],\\
&\bs f_2(r,\theta,\varphi) = \sum_{i=1}^3\sum_{l=0}^\infty \sum_{|m|= 0}^{l} u_{I_i}\Big[  f_{i,l,m}^{2,T} (r)\spht(\theta,\varphi) + f_{i,l,m}^{2,\nabla_S} (r)\nabla_S Y_l^m (\theta,\varphi)+ f_{i,l,m}^{2,r} (r){\bs e}_r\Big].\\
&\bs g_i(\theta,\varphi)= \sum_{l=0}^\infty \sum_{|m|= 0}^{l} \Big[  g_{i,l,m}^{T}(r) \spht(\theta,\varphi) + g_{i,l,m}^{\nabla_S}\nabla_SY_{l,m}(\theta,\varphi) \Big],
\end{align}
After applying double curl on $\bs B_N$, 
\begin{align}
\nabla\times (\nabla \times \bs B_N) = \sum_{i=1}^3\sum_{l=0}^\infty \sum_{|m|= 0}^{l} u_{I_i}\Big[\mathcal{L}(t_{i,l,m}(r)) \spht + \frac{l(l+1) \mathcal{L} (A_{i,l,m}(r)) }{r} Y_l^m {\bs e}_r \\
+ \frac{1}{r} \frac{\partial \Big(r \mathcal{L} (A_l^m(r)) \Big)}{\partial r} \nabla_S Y_l^m\Big].
\end{align}
Direct calculation on $\nabla \times {\bs f}_2$ gives,
\begin{align}
\nabla \times {\bs f}_2 =  \sum_{i=1}^3\sum_{l=0}^\infty \sum_{|m|= 0}^{l} u_{I_i}\Big[ \frac{l(l+1) f_{i,l,m}^{2,T}(r)}{r} Y_l^m (\theta,\varphi){\bs e}_r + \frac{1}{r}\frac{d(r f_{i,l,m}^{2,T}(r))}{d r} \nabla_S Y_l^m (\theta,\varphi)\\
- \frac{1}{r} \frac{d (r f_{i,l,m}^{2,\nabla_S}(r))}{d r} \spht(\theta,\varphi) + \frac{f_{i,l,m}^{2,r}(r)}{r} \spht(\theta,\varphi)\Big].
\end{align}
Due to the othorganality of $\spht, Y_l^m \bs e_r$ and $\nabla_S Y_l^m$, it is easy to derive that
for $\spht$ direction,
\begin{align}
\alpha t_{i,l,m}(r) +\bar \beta_i \mathcal{L} (t_{i,l,m}(r)) = f_{i,l,m}^{1,T} (r)+ \frac{f_{i,l,m}^{2,r}(r)}{r} - \frac{1}{r} \frac{d (r f_{i,l,m}^{2,\nabla_S}(r))}{d r},  \;\; {\rm in} \; \; I_i,
\end{align}
for $\nabla_S Y_l^m$ direction,
\begin{align}
\alpha \frac{1}{r} \frac{d ( r A_{i,l,m}(r)) }{d r} + \bar\beta \frac{1}{r} \frac{d \Big( r \mathcal{L}A_{i,l,m}(r) \Big)}{d r} = f_{i,l,m}^{1,\nabla_S}(r) + \frac{1}{r} \frac{d (r f_{i,l,m}^{2,T}(r))}{d r}, \label{dyn_ny}
\end{align}
for ${\bs e}_r$ direction,
\begin{align}
\alpha A_{i,l,m}(r) + \bar \beta_i \mathcal{L}(A_{i,l,m}(r))= \frac{r}{l(l+1)} f_{i,l,m}^{1,r}(r) + f_{i,l,m}^{2,T}(r).  \;\; {\rm in} \; \; I_i.\label{dyn_er}
\end{align}
Notice the fact that $\bs f_1$ is also in the solenoidal vector field, which means
\begin{align}
\frac{1}{r^2} \frac{d(r^2 f_{l,m}^{1,r})}{d r} = \frac{l(l+1)}{r} f_{l,m}^{1,\nabla_S},
\end{align}
(\ref{dyn_ny}) can be rewritten as:
\begin{align}
\alpha \frac{1}{r} \frac{d( r A_{i,l,m}(r)) }{d r} + \bar\beta \frac{1}{r} \frac{d\Big( r \mathcal{L}A_{i,l,m}(r) \Big)}{dr} = \frac{1}{r} \frac{d}{dr} (\frac{ r^2f_{i,l,m}^{1,r}(r)}{l+1}) + \frac{1}{r} \frac{d (r f_{i,l,m}^{2,T}(r))}{d r}, \label{dyn_ny1}
\end{align}
We want to make a remark that (\ref{dyn_er}) and (\ref{dyn_ny1}) differ in the order of the PDE in the sense that solution of $A_{i,l,m}(r)$ can differ up to a constant. We have already pointed out that in the solenoidal representation, the $A(r)$ is not unique, but we can manually set the constant to any value for convenience. 

We focus now on the boundary conditions. First we consider
$$
[\BB_N]=0,
$$
this requires the continuity at intersections, which leads to the following conditions. 
For $\spht$ direction:
 \begin{align}
& t_{1,l,m}(a)=t_{2,l,m}(a), \; \; t_{2,l,m}(b)=t_{3,l,m}(b), 
\end{align}
%& \bar{\beta}_3 t^3_{l,m} (c)= g^{3,T}_{l,m},\\
%& \bar{\beta}_1 d_1^+ t^1_{l,m} (a) - \bar{\beta}_2 d_1^+ t^2_{l,m} (a) = g^{1,T}_{l,m},\\
%& \bar{\beta}_3 d_1^+ t^3_{l,m} (a) - \bar{\beta}_2 d_1^+ t^2_{l,m} (a) = g^{3,T}_{l,m};
%\end{align}
%\begin{align}
%&A^{3}_{l,m} =\frac{1}{\alpha}(\frac{r}{l(l+1)} f_{l,m}^{1,r} + f_{l,m}^{2,T} + g_{l,m}^{3,\nabla_S})\\
for $\nabla_SY_l^m$ direction:
\begin{align}
\frac{d}{d r}(rA_{1,l,m}(r))\Big |_{r=a}=\frac{d}{d r}(rA_{2,l,m}(r))\Big |_{r=a},\;\;
\frac{d}{dr}(rA_{2,l,m}(r))\Big |_{r=b}=\frac{d}{d r}(rA_{3,l,m}(r))\Big |_{r=b}
\end{align}
for $Y_l^m \bs e_r$ direction:
\begin{align}
A_{1,l,m}(a)=A_{2,l,m}(a), \; \; A_{2,l,m}(b)=A_{3,l,m}(b).
\end{align}
Next, we consider 
$$
[\nabla \times \BB_N \times \bs n] = \bs g,
$$
which will leads to for $\spht$ direction:
\begin{align}
%&  d_1^+t^3_{l,m} (c)=0,\\
& \bar{\beta}_1 d_1^+ t^1_{l,m} (a) - \bar{\beta}_2 d_1^+ t^2_{l,m} (a) = g^{T}_{1,l,m},\\
& \bar{\beta}_3 d_1^+ t^3_{l,m} (b) - \bar{\beta}_2 d_1^+ t^2_{l,m} (b) = g^{T}_{2,l,m},
\end{align}
for $\nabla_S Y_l^m$ direction:
\begin{align}
&\bar{\beta}_2 \mathcal{L}(A_{2,l,m}(a))-\bar{\beta}_1 \mathcal{L}(A_{1,l,m}(a))=g^{\nabla_S}_{1,l,m},\\
&\bar{\beta}_2 \mathcal{L}(A_{2,l,m}(b))-\bar{\beta}_3 \mathcal{L}(A_{3,l,m}(b))=g^{\nabla_S}_{2,l,m},
\end{align}
and no condition can be given in the $\bs e_r$ direction.\\
On $\Gamma_3$, the boundary condition is:
$$
\nabla \times \BB_N \times \bs n = 0,
$$
this leads to
 \begin{align}
& d_1^+ t^3_{l,m} (c)= 0,\quad \mathcal{L}_l (A^3_{l,m})(c) = 0.
%& \bar{\beta}_1 d_1^+ t^1_{l,m} (a) - \bar{\beta}_2 d_1^+ t^2_{l,m} (a) = g^{1,T}_{l,m},\\
%& \bar{\beta}_3 d_1^+ t^3_{l,m} (a) - \bar{\beta}_2 d_1^+ t^2_{l,m} (a) = g^{3,T}_{l,m};
%\end{align}
%\begin{align}
%&A^{3}_{l,m} =\frac{1}{\alpha}(\frac{r}{l(l+1)} f_{l,m}^{1,r} + f_{l,m}^{2,T} + g_{l,m}^{3,\nabla_S})\\
\end{align}
It is quite clear the for $\nabla_S Y_l^m$ and $Y_l^m \bs e_r$ directions, the differential equations are essentially the same but boundary conditions differ a lot. This is because the $\bs e_r$ direction is a consequence in the solenoidal vector field. We will take the $\nabla_S Y_l^m$ as the first choice, and still taking account the boundary conditions for $\bs e_r$ direction. 

Now we can summarize the strong form for $t_{i,l,m}(r)$ and $A_{i,l,m}(r)$.
\begin{align}
&\alpha t_{i,l,m}(r) +\bar \beta_i \mathcal{L} (t_{i,l,m}(r)) = f_{i,l,m}^{1,T} (r)+ \frac{f_{i,l,m}^{2,r}(r)}{r} - \frac{1}{r} \frac{d (r f_{i,l,m}^{2,\nabla_S}(r))}{d r},  \;\; {\rm in} \; \; I_i,\\
& t_{1,l,m}(a)=t_{2,l,m}(a), \; \; t_{2,l,m}(b)=t_{3,l,m}(b),\\
& \bar{\beta}_1 d_1^+ t^1_{l,m} (a) - \bar{\beta}_2 d_1^+ t^2_{l,m} (a) = g^{T}_{1,l,m},\\
& \bar{\beta}_3 d_1^+ t^3_{l,m} (b) - \bar{\beta}_2 d_1^+ t^2_{l,m} (b) = g^{T}_{2,l,m},\\
& d_1^+ t^3_{l,m} (c)= 0.
\end{align}

\begin{align}
&\alpha ( r A_{i,l,m}(r))' + \bar\beta \Big( r \mathcal{L}A_{i,l,m}(r) \Big)'= r f_{i,l,m}^{1,\nabla_S}(r)+  (r f_{i,l,m}^{2,T}(r))', \\
&A_{1,l,m}(a)=A_{2,l,m}(a), \; \; A_{2,l,m}(b)=A_{3,l,m}(b),\\
&A_{1,l,m}'(a)=A_{2,l,m}'(a), \; \; A_{2,l,m}'(b)=A_{3,l,m}'(b),\\
&\bar{\beta}_2 \mathcal{L}(A_{2,l,m}(a))-\bar{\beta}_1 \mathcal{L}(A_{1,l,m}(a))=g^{\nabla_S}_{1,l,m},\\
&\bar{\beta}_2 \mathcal{L}(A_{2,l,m}(b))-\bar{\beta}_3 \mathcal{L}(A_{3,l,m}(b))=g^{\nabla_S}_{2,l,m},\\
&\mathcal{L}_l (A^3_{l,m})(c) = 0.
\end{align}

%for  continuity of $\beta(x) \nabla \times\bs B_N\times \bs n$ at intersections,
%we observe that for $\spht$ direction,
%\begin{align*}
%\beta_1(a) d_1^+ t^1_{l,m}(a) = \beta_2(a) d_1^+ t^2_{l,m}(a), \;\; \beta_3(b) d_1^+ t^3_{l,m}(b) = \beta_2(b) d_1^+ t^2_{l,m}(b),
%\end{align*}
%similarly, we have a second order boundary condition for $\nabla_S Y_l^m$ direction
%\begin{align}
%\beta_1(a) \mathcal{L}_l A^{1}_{l,m} (a) = \beta_2(a) \mathcal{L}_l A^2_{l,m}(a),
%\;\;
%\beta_3(b) \mathcal{L}_l A^{3}_{l,m} (b) = \beta_2(b) \mathcal{L}_l A^2_{l,m}(b).
%\end{align}
%However, now we have more boundary conditions than needed.
%We now show that (6.11) is a set of unnecessary boundary conditions. 
%
%In the original model, $f(x,t)$ and $\bs u $ has compact support on $I_2$, therefore near intersections, terms in $\bf f_2$ in (2.14) are almost zero except 
%$(\bar{\beta}-\beta(x) )\nabla \times \bs B^{n-1}$. Written under expansion, $f^{2,T}_{l,m} $ corresponds to $(\bar{\beta} - \beta(x)) \mathcal{L}_l A^i_{l,m}$. 
%By continuity of previous tilmestep and first and zeroth derivative of $A^i_{l,m}$ at intersections, the above boundary conditions are automatically satisfied. 
\bigskip

\bibliographystyle{plain}

\bibliography{refpapers_dynamo,div}

\begin{thebibliography}{10}

\bibitem{baker1990piecewise}
Garth~A Baker, Wadi~N Jureidini, and Ohannes~A Karakashian.
\newblock Piecewise solenoidal vector fields and the stokes problem.
\newblock {\em SIAM journal on numerical analysis}, 27(6):1466--1485, 1990.

\bibitem{MR2354034}
R.~A. Bayliss, C.~B. Forest, M.~D. Nornberg, E.~J. Spence, and P.~W. Terry.
\newblock Numerical simulations of current generation and dynamo excitation in
  a mechanically forced turbulent flow.
\newblock {\em Phys. Rev. E (3)}, 75(2):026303, 13, 2007.

\bibitem{bullard1954homogeneous}
Edward~Crisp Bullard and H~Gellman.
\newblock Homogeneous dynamos and terrestrial magnetism.
\newblock {\em Phil. Trans. R. Soc. Lond. A}, 247(928):213--278, 1954.

\bibitem{Chan06}
K.~Chan, K.~Zhang, and J.~Zou.
\newblock Spherical interface dynamos: Mathematical theory, finite element
  approximation, and application.
\newblock {\em SIAM Journal on Numerical Analysis}, 44(5):1877--1902, 2006.

\bibitem{chan2001nonlinear}
Kit~H Chan, Keke Zhang, Jun Zou, and Gerald Schubert.
\newblock A nonlinear vacillating dynamo induced by an electrically
  heterogeneous mantle.
\newblock {\em Geophysical Research Letters}, 28(23):4411--4414, 2001.

\bibitem{MR851383}
Vivette Girault and Pierre-Arnaud Raviart.
\newblock {\em Finite element methods for {N}avier-{S}tokes equations},
  volume~5 of {\em Springer Series in Computational Mathematics}.
\newblock Springer-Verlag, Berlin, 1986.
\newblock Theory and algorithms.

\bibitem{MR2774729}
C.~Guervilly and P.~Cardin.
\newblock Numerical simulations of dynamos generated in spherical {C}ouette
  flows.
\newblock {\em Geophys. Astrophys. Fluid Dyn.}, 104(2-3):221--248, 2010.

\bibitem{Hill54}
E.L. Hill.
\newblock The theory of vector spherical harmonics.
\newblock {\em Amer. J. Phys.}, 22:211--214, 1954.

\bibitem{hollerbach1996theory}
R.~Hollerbach.
\newblock On the theory of the geodynamo.
\newblock {\em Physics of the Earth and Planetary interiors}, 98(3):163--185,
  1996.

\bibitem{hollerbach2000spectral}
Rainer Hollerbach.
\newblock A spectral solution of the magneto-convection equations in spherical
  geometry.
\newblock {\em International journal for numerical methods in fluids},
  32(7):773--797, 2000.

\bibitem{MR2768025}
Chris~A. Jones.
\newblock Planetary magnetic fields and fluid dynamos.
\newblock In {\em Annual review of fluid mechanics. {V}olume 43, 2011},
  volume~43 of {\em Annu. Rev. Fluid Mech.}, pages 583--614. Annual Reviews,
  Palo Alto, CA, 2011.

\bibitem{karakashian1998nonconforming}
Ohannes~A Karakashian and Wadi~N Jureidini.
\newblock A nonconforming finite element method for the stationary
  {N}avier--{S}tokes equations.
\newblock {\em SIAM journal on numerical analysis}, 35(1):93--120, 1998.

\bibitem{kuang1999numerical}
W.~Kuang and J.~Bloxham.
\newblock Numerical modeling of magnetohydrodynamic convection in a rapidly
  rotating spherical shell: weak and strong field dynamo action.
\newblock {\em Journal of Computational Physics}, 153(1):51--81, 1999.

\bibitem{MR2215943}
David Moss.
\newblock Numerical simulation of the {G}ailitis dynamo.
\newblock {\em Geophys. Astrophys. Fluid Dyn.}, 100(1):49--58, 2006.

\bibitem{munz2000divergence}
C-D Munz, Pascal Omnes, Rudolf Schneider, Eric Sonnendr{\"u}cker, and Ursula
  Voss.
\newblock Divergence correction techniques for {M}axwell solvers based on a
  hyperbolic model.
\newblock {\em Journal of Computational Physics}, 161(2):484--511, 2000.

\bibitem{Nede01}
J.C. N{\'e}d{\'e}lec.
\newblock {\em Acoustic and Electromagnetic Equations}, volume 144 of {\em
  Applied Mathematical Sciences}.
\newblock Springer-Verlag, New York, 2001.
\newblock Integral representations for harmonic problems.

\bibitem{MR2537984}
Mohammad~M. Rahman and David~R. Fearn.
\newblock A spectral solution of nonlinear mean field dynamo equations: with
  inertia.
\newblock {\em Comput. Math. Appl.}, 58(3):422--435, 2009.

\bibitem{MR2602741}
Mohammad~M. Rahman and David~R. Fearn.
\newblock A spectral solution of nonlinear mean field dynamo equations: without
  inertia.
\newblock {\em Commun. Nonlinear Sci. Numer. Simul.}, 15(9):2552--2564, 2010.

\bibitem{MR2774728}
Paul~H. Roberts, Gary~A. Glatzmaier, and Thomas~L. Clune.
\newblock Numerical simulation of a spherical dynamo excited by a flow of von
  {K}\'arm\'an type.
\newblock {\em Geophys. Astrophys. Fluid Dyn.}, 104(2-3):207--220, 2010.

\bibitem{MR716200}
Michel Sermange and Roger Temam.
\newblock Some mathematical questions related to the {MHD} equations.
\newblock {\em Comm. Pure Appl. Math.}, 36(5):635--664, 1983.

\bibitem{tuugluk2012direct}
Ozan Tu{\u{g}}luk and Hakan~I Tarman.
\newblock Direct numerical simulation of pipe flow using a solenoidal spectral
  method.
\newblock {\em Acta Mechanica}, 223(5):923--935, 2012.

\bibitem{yee1966numerical}
Kane Yee.
\newblock Numerical solution of initial boundary value problems involving
  {M}axwell's equations in isotropic media.
\newblock {\em IEEE Transactions on antennas and propagation}, 14(3):302--307,
  1966.

\bibitem{zhang2003three}
K~Zhang, KH~Chan, J~Zou, X~Liao, and G~Schubert.
\newblock A three-dimensional spherical nonlinear interface dynamo.
\newblock {\em The Astrophysical Journal}, 596(1):663, 2003.

\bibitem{zhang1989convection}
K-K Zhang and FH~Busse.
\newblock Convection driven magnetohydrodynamic dynamos in rotating spherical
  shells.
\newblock {\em Geophysical \& Astrophysical Fluid Dynamics}, 49(1-4):97--116,
  1989.

\end{thebibliography}

\end{document}